\numberwithin{equation}{section}
\theoremstyle{plain}
\newtheorem{theorem}{Theorem}[section]
\newtheorem{Def}[theorem]{Definition}
\newtheorem{lemma}[theorem]{Lemma}
\theoremstyle{definition}
\theoremstyle{remark}
\newtheorem{case[theorem]}{Case}
\def \R{{\mathbb R}}
\def \N{{\mathbb N}}
\def \C{{\mathbb C}}
\def\H{{\mathbb H}}
\def\norm#1.#2.{\lVert#1\rVert_{#2}}
\def\R{\mathbb R}
\def \S{{\mathcal S}}
\def \H{{\mathcal H}}
\title{Uncertainty principles for the Opdam--Cherednik transform on modulation spaces}
\author{Anirudha Poria}
\address{Department of Mathematics,
Indian Institute of Science,
Bengaluru 560012, Karnataka, India.}
\email{anirudhap@iisc.ac.in}
\keywords{Opdam--Cherednik transform; modulation spaces; Cowling--Price's theorem; Hardy's theorem; Morgan's theorem.}
\subjclass[2010]{Primary 44A15; Secondary 42B35, 43A32, 33C45.}
\date{\today}
\begin{document}
\maketitle
\begin{abstract} 
In this paper, we establish the Cowling--Price's, Hardy's and Morgan's uncertainty principles for the Opdam--Cherednik transform on modulation spaces associated with this transform. The proofs of the theorems are based on the properties of the heat kernel associated with the Jacobi--Cherednik operator and the versions of the Phragm{\'e}n--Lindl{\"o}f type result for the modulation spaces.
\end{abstract}    

\section{Introduction} 

The uncertainty principle states that a non-zero function and its Fourier transform cannot be simultaneously sharply localized. There are various ways of measuring localization of a function and depending on it one can formulate different forms of the uncertainty principle. Uncertainty principles can be subdivided into quantitative and qualitative uncertainty principles. Quantitative uncertainty principles are some special inequalities which give us information about how a function and its Fourier transform relate. For example, Benedicks \cite{ben85}, Donoho and Stark \cite{don89}, and Slepian and Pollak \cite{sle61} gave qualitative uncertainty principles for the Fourier transforms. Qualitative uncertainty principles imply the vanishing of a function under some strong conditions on the function.  For example,  Hardy \cite{har33}, Morgan  \cite{mor34}, Cowling and Price \cite{cow83}, and Beurling \cite{hor91} theorems are the qualitative uncertainty principles. More precisely, Hardy \cite{har33} obtained the following uncertainty principle concerning the decay of a measurable function and its Fourier transform at infinity. 
\begin{theorem}[Hardy]\label{Hardy}
Let $f$ be a measurable function on $\R$ such that
\[ |f(x)| \leq C e^{-a  x^2}  \qquad \mathrm{and} \qquad  |\hat{f}(\xi)| \leq C e^{-b  \xi^2} \]
for some constants $a, b, C > 0$. Then three cases can occur.
\begin{enumerate}
\item[(i)] If $ab> \frac{1}{4}$, then $f = 0$ almost everywhere.
\item[(ii)] If $ab=\frac{1}{4}$, then the function $f$ is of the form $f(x)=C e^{-a  x^2}$, for some constant $C$.
\item[(iii)] If $ab<\frac{1}{4}$, then any finite linear combination of Hermite functions satisfies these decay conditions.
\end{enumerate} 
\end{theorem}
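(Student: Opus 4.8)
The plan is to prove the decisive borderline case (ii) first by complex-analytic methods, and then to deduce (i) and (iii) from it. Throughout I would fix the normalization for which the Gaussian $e^{-ax^2}$ has Fourier transform a constant multiple of $e^{-\xi^2/(4a)}$, so that the critical product of the decay rates is exactly $1/4$. First I would exploit the Gaussian decay of $f$ to extend $\hat f$ off the real line: since $|f(x)e^{-ixz}| \le C e^{-ax^2 + |x|\,|\operatorname{Im} z|}$, the integral $F(z) = (2\pi)^{-1/2}\int_{\R} f(x) e^{-ixz}\,dx$ converges locally uniformly and defines an entire function with $F|_{\R} = \hat f$. Completing the square in the exponent gives the growth estimate $|F(\xi + i\eta)| \le C'' e^{\eta^2/(4a)}$, so $F$ has order at most $2$ and is bounded on every horizontal line, while the hypothesis on $\hat f$ supplies the complementary real-axis decay $|F(\xi)| \le C e^{-b\xi^2}$.

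In the critical case $ab = \tfrac14$ I would set $b = 1/(4a)$ and study the auxiliary function $G(z) = F(z) e^{bz^2}$. This is entire of order at most $2$; it is bounded on the real axis, where the factor $e^{b\xi^2}$ exactly cancels the decay $e^{-b\xi^2}$, and it is bounded on the imaginary axis, where $|e^{bz^2}| = e^{-b\eta^2}$ cancels the growth $e^{b\eta^2}$; more generally one gets the directional bound $|G(\xi + i\eta)| \le C'' e^{b\xi^2}$. I would then run a Phragm\'en--Lindel\"of argument in each of the four quadrants: $G$ is bounded on the two bounding half-axes and has order-$2$ growth inside a sector of opening $\pi/2$, and the directional estimate (which is bounded as one moves toward the imaginary axis) forces $G$ to be bounded throughout each quadrant, hence on all of $\C$. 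By Liouville's theorem $G$ is a constant $\kappa$, so $\hat f(\xi) = \kappa e^{-b\xi^2}$, and Fourier inversion yields $f(x) = C e^{-x^2/(4b)} = C e^{-ax^2}$, which is conclusion (ii).

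For (i), when $ab > \tfrac14$, I would reduce to the critical case: choosing $a' = 1/(4b) < a$, the bound $|f(x)| \le C e^{-ax^2} \le C e^{-a'x^2}$ shows that $f$ satisfies the hypotheses with the critical pair $(a',b)$, so (ii) gives $f(x) = C_0 e^{-a'x^2}$; but then the original estimate $|f(x)| \le C e^{-ax^2}$ with $a > a'$ forces $C_0 e^{-a'x^2} \le C e^{-ax^2}$ for all $x$, whence $C_0 = 0$ and $f \equiv 0$. For (iii), when $ab < \tfrac14$, I would exhibit examples: the Hermite functions $h_n(x) = H_n(x) e^{-x^2/2}$ are eigenfunctions of the Fourier transform, $\widehat{h_n} = (-i)^n h_n$, and satisfy $|h_n(x)| \le C_n e^{-ax^2}$ and $|\widehat{h_n}(\xi)| \le C_n e^{-b\xi^2}$ for every $a,b < \tfrac12$; after a dilation one realizes any prescribed pair with $ab < \tfrac14$, so every finite linear combination of Hermite functions meets the decay conditions and no rigidity can hold.

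The main obstacle is the Phragm\'en--Lindel\"of step in case (ii). The growth of $G$ is of order exactly $2$ on sectors of opening exactly $\pi/2$, which is precisely the borderline at which the naive Phragm\'en--Lindel\"of principle fails, as the example $e^{z^2}$ shows. What must be used is that the estimate $|G(\xi + i\eta)| \le C'' e^{b\xi^2}$ is genuinely directional, so that a refined (indicator / trigonometric-convexity) version of the principle, or a regularization by a factor $e^{\pm i\epsilon z^2}$ followed by the limit $\epsilon \to 0$, is needed to push the effective order strictly below the critical value and force boundedness. This is exactly the Phragm\'en--Lindel\"of-type input that the remainder of the paper develops in the modulation-space setting.
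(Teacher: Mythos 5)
Your proposal cannot be compared against ``the paper's own proof'' for a simple reason: the paper never proves Theorem \ref{Hardy}. It is quoted verbatim from Hardy \cite{har33} as classical background (just as Theorems \ref{Cow-Pri} and \ref{mor} are quoted from \cite{cow83} and \cite{mor34}), and the paper's actual work consists of proving analogues of these statements for the Opdam--Cherednik transform on modulation spaces. Judged on its own terms, your outline is the standard classical proof and its components are sound: the entire extension $F$ of $\hat f$ with $|F(\xi+i\eta)|\le C''e^{\eta^2/(4a)}$, the reduction of case (i) to the critical case by weakening $a$ to $a'=1/(4b)$, and the dilated Hermite functions for case (iii) all check out. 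The step you leave as a sketch is indeed the crux, and you diagnose it correctly: $G(z)=F(z)e^{bz^2}$ has order-$2$ growth on quadrants of opening $\pi/2$, exactly the borderline where naive Phragm{\'e}n--Lindel{\"o}f fails. Your proposed regularization does close this gap, and for completeness here is how: writing $z=re^{i\theta}$, one has $|G(z)e^{i\epsilon z^2}|\le C''e^{r^2\cos\theta\,(b\cos\theta-2\epsilon\sin\theta)}$, so this function is bounded on the ray $\theta=0$ (by the real-axis bound on $G$), bounded on the ray $\theta=\theta_\epsilon:=\arctan\bigl(b/(2\epsilon)\bigr)$ where the exponent vanishes, and bounded outright on $\theta_\epsilon\le\theta\le\tfrac{\pi}{2}$ where the exponent is nonpositive; Phragm{\'e}n--Lindel{\"o}f on the sub-sector $0\le\theta\le\theta_\epsilon$ (opening strictly less than $\pi/2$, so order $2$ is admissible) then bounds it on the whole closed first quadrant, and letting $\epsilon\to 0$ bounds $G$ there; the other quadrants are symmetric, and Liouville gives $G\equiv\kappa$. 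It is worth noting that your argument has the same skeleton as the paper's own proofs in Sections \ref{sec4} and \ref{sec5}: there the author forms the auxiliary function $g(\lambda)=e^{\lambda^2/(4a)}\H f(\lambda)$, feeds it into a Phragm{\'e}n--Lindel{\"o}f-type lemma adapted to modulation norms (Lemma \ref{lem4}, following \cite{cow83}) to force $g$ to be constant, and settles the equality case $ab=\tfrac14$ by the heat-kernel estimates (\ref{eq06}), which play exactly the role that Fourier inversion of the Gaussian plays in your case (ii).
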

Cowling and Price \cite{cow83} generalized this theorem by replacing pointwise Gaussian bounds for $f$ by Gaussian bounds in $L^p$ sense and in $L^q$ sense for $\hat{f}$ as well. More precisely, they proved the following theorem.
\begin{theorem}[Cowling--Price]\label{Cow-Pri}
Let $f$ be a measurable function on $\R$ such that
\begin{enumerate}
\item[(i)] $\Vert e^{a x^2} f  \Vert_p < \infty,$
\item[(ii)] $\Vert e^{b \xi^2} \hat{f}  \Vert_q < \infty,$
\end{enumerate}
where $a, b>0$ and $1 \leq p, q \leq \infty$ such that $\min(p, q)$ is finite. If $ab \geq \frac{1}{4}$, then $f=0$ almost everywhere. If $ab < \frac{1}{4}$, then there exist infinitely many linearly independent functions satisfying $\mathrm{(i)}$ and $\mathrm{(ii)}$.
\end{theorem}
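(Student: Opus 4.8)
The plan is to reduce the statement to Hardy's theorem (Theorem \ref{Hardy}). The existence part is the easy half: when $ab<\frac14$ I would pick a scale $\lambda$ with $2a<\lambda^{2}<1/(2b)$, which is possible precisely because $4ab<1$, and verify that every dilated Hermite function $x\mapsto h_{n}(\lambda x)$, $n\ge 0$, satisfies $\mathrm{(i)}$ and $\mathrm{(ii)}$; these form an infinite linearly independent family. The substance is therefore the vanishing assertion for $ab\ge\frac14$. Here I would first normalise: since shrinking $a$ only weakens $\mathrm{(i)}$, it suffices to treat the critical case $ab=\frac14$; and since the Fourier transform and its inverse play symmetric roles (swapping $f\leftrightarrow\hat f$, $a\leftrightarrow b$, $p\leftrightarrow q$), the hypothesis $\min(p,q)<\infty$ lets me assume $p<\infty$.

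The heart of the matter is upgrading the weighted $L^{p}$ and $L^{q}$ bounds to \emph{pointwise} Gaussian bounds, and the main obstacle is that a naive mollification loses an $\varepsilon$ in each constant and thereby pushes the product strictly below $\frac14$, where Hardy's theorem yields nothing. The device that overcomes this is that convolution and multiplication by Gaussians preserve the critical product $ab=\frac14$ \emph{exactly}. Writing $\gamma_{s}(x)=e^{-x^{2}/(4s)}$, I would record the elementary estimate obtained from H\"older's inequality after completing the square: if $\Vert e^{c x^{2}}\phi\Vert_{r}<\infty$ then
\[ \bigl|(\phi*\gamma_{s})(x)\bigr|\le C\,e^{-\frac{c}{1+4cs}\,x^{2}}\qquad\text{pointwise.} \]
Applying this to $\phi=f$ (with $r=p$, scale $s=\varepsilon$) shows that $g_{1}:=f*\gamma_{\varepsilon}$ decays pointwise with constant $a'=a/(1+4a\varepsilon)$, while $\widehat{g_{1}}\propto\hat f\,e^{-\varepsilon\xi^{2}}$ still obeys the weighted $L^{q}$ bound with constant $b'=b+\varepsilon$, and a one-line computation gives $a'b'=\frac14$. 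Multiplying by a second Gaussian, $g:=g_{1}\,e^{-\delta x^{2}}$, improves $g$ to pointwise constant $a'+\delta$, and since $\hat g=\widehat{g_{1}}*\gamma_{\delta}$ the same estimate makes $\hat g$ decay pointwise with constant $b'/(1+4b'\delta)$; once more the product is exactly $\frac14$. Thus $g$ and $\hat g$ satisfy genuine pointwise Gaussian bounds with product $\frac14$.

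Now Hardy's theorem, case $\mathrm{(ii)}$, forces $g(x)=C\,e^{-(a'+\delta)x^{2}}$. Unwinding the two Gaussian operations (equivalently, taking Fourier transforms and using $1/(4a')=b+\varepsilon$) I would deduce $\hat f(\xi)=C'e^{-b\xi^{2}}$, so that $f(x)=C''e^{-ax^{2}}$. Feeding this back into hypothesis $\mathrm{(i)}$ gives $\Vert e^{ax^{2}}f\Vert_{p}=|C''|\,\Vert 1\Vert_{L^{p}(\R)}$, which is infinite when $p<\infty$ unless $C''=0$; hence $f=0$ almost everywhere, completing the case $p<\infty$ and, by symmetry, the case $q<\infty$. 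The only delicate points are the book-keeping that shows the product of the Gaussian constants stays pinned at $\frac14$ through both operations, and checking that the mollified objects are well defined, both of which are handled by the displayed H\"older estimate.
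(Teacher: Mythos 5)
Your proof is correct, but note that the paper never actually proves Theorem \ref{Cow-Pri}: it is quoted as classical background from \cite{cow83}, and the method of that reference --- study the entire function $g(\lambda)=e^{\lambda^{2}/4a}\hat f(\lambda)$, apply a Phragm{\'e}n--Lindl{\"o}f estimate on a quarter-plane, then use Cauchy's integral formula to kill the derivatives $g^{(n)}(0)$ --- is exactly what the author adapts in Section \ref{sec4} (Lemma \ref{lem4} and Theorem \ref{th2}) to the Opdam--Cherednik, modulation-space setting. Your route is genuinely different: you reduce Cowling--Price to Hardy's theorem by Gaussian mollification, and the crucial bookkeeping that makes this work checks out. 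Indeed, convolving with $e^{-x^{2}/(4\varepsilon)}$ moves the constants by $a\mapsto a'=a/(1+4a\varepsilon)$ and $b\mapsto b'=b+\varepsilon$, and $a'b'=\frac{1/4+a\varepsilon}{1+4a\varepsilon}=\frac14$ when $ab=\frac14$; multiplying by $e^{-\delta x^{2}}$ then gives constants $a'+\delta$ and $b'/(1+4b'\delta)$, whose product is again exactly $\frac14$ since $b'=1/(4a')$. The completing-the-square H\"older estimate, the reductions (shrinking $a$ to reach $ab=\frac14$, and Fourier symmetry to assume $p<\infty$), and the endgame where $f=Ce^{-ax^{2}}$ contradicts $\Vert e^{ax^{2}}f\Vert_{p}<\infty$ for $p<\infty$ unless $C=0$, are all sound, as is the dilated-Hermite family for $ab<\frac14$ (the constraint $2a<\lambda^{2}<1/(2b)$ is solvable precisely when $4ab<1$). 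The trade-off between the two approaches: yours is shorter and exhibits Cowling--Price as a corollary of Hardy's theorem, but it uses the equality case of Hardy as a black box (which is where the complex analysis is hidden), and it leans on pointwise operations --- convolution with, and multiplication by, Gaussians --- that are well adapted to the Euclidean Fourier transform but do not transfer to settings like the one this paper studies, where no analogous operations compatible with $\mathcal{H}$ are available; the Phragm{\'e}n--Lindl{\"o}f method is self-contained, treats the $L^{q}$ (or $M^{q}$) information directly through the sector estimate and Cauchy's formula, and is precisely the part of the argument that survives generalization, which is why the paper builds on it rather than on a reduction to Hardy.
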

Over the years, analogues of Hardy's theorem have been extended to different settings (see \cite{tha04}). By replacing the Gaussian function $e^{a x^2}$ in Hardy's theorem by the function $e^{a |x|^\alpha}$ where $\alpha > 2$, Morgan \cite{mor34} obtained the following uncertainty principle. 
\begin{theorem}[Morgan]\label{mor}
Let $a>0$, $b>0$, and let $\alpha, \beta$ be positive real numbers satisfying $\alpha >2$ and $1/\alpha+1/\beta=1$. Suppose that $f$ is a measurable function on $\R$ such that  
\[ e^{a|x|^\alpha} f \in L^\infty(\R) \quad \text{and} \quad  e^{b|\lambda|^\beta} \hat{f} \in L^\infty(\R). \]
If $ (a \alpha)^{1/\alpha} (b \beta)^{1/\beta} > \left( \sin \left( \frac{\pi}{2}(\beta -1 ) \right) \right)^{1/\beta}, $ then $f = 0$ almost everywhere. 
\end{theorem}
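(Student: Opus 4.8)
The plan is to move both decay hypotheses into the complex plane and then run a Phragmén--Lindelöf argument. The conditions $e^{a|x|^\alpha}f\in L^\infty(\R)$ and $e^{b|\lambda|^\beta}\hat f\in L^\infty(\R)$ give pointwise bounds $|f(x)|\le Ce^{-a|x|^\alpha}$ and $|\hat f(\lambda)|\le Ce^{-b|\lambda|^\beta}$. Since $\alpha>2>1$, the super-exponential decay of $f$ lets me define $F(z)=\int_{\R}f(x)e^{-ixz}\,dx$ for every $z\in\C$: the integral converges absolutely and $F$ is entire with $F|_{\R}=\hat f$. To control its growth I would estimate, for $z=x+iy$,
\[ |F(z)|\le C\int_{\R}e^{-a|t|^\alpha+ty}\,dt\le C'\exp\Big(\sup_{t\in\R}\big(ty-a|t|^\alpha\big)\Big), \]
and evaluate the supremum by the Legendre/Young computation: using $\tfrac1\alpha+\tfrac1\beta=1$ and $\tfrac\beta\alpha=\beta-1$ one finds $\sup_t(ty-a|t|^\alpha)=|y|^\beta/\big(\beta(a\alpha)^{\beta-1}\big)=:A|y|^\beta$. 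Hence $F$ is entire of order $\beta\in(1,2)$ with $|F(z)|\le C'\exp\big(A|\mathrm{Im}\,z|^\beta\big)$, while $|F(x)|\le Ce^{-b|x|^\beta}$ on the real axis.

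The theorem then reduces to the purely analytic claim: \emph{if $F$ is entire with $|F(z)|\le C\exp(A|\mathrm{Im}\,z|^\beta)$ and $|F(x)|\le C\exp(-b|x|^\beta)$ on $\R$, and if $b>A\sin\tfrac\pi2(\beta-1)$, then $F\equiv0$}. First I would verify that the hypothesis on the constants is exactly this inequality: raising $(a\alpha)^{1/\alpha}(b\beta)^{1/\beta}>(\sin\tfrac\pi2(\beta-1))^{1/\beta}$ to the power $\beta$ and using $\beta/\alpha=\beta-1$ turns it into $(a\alpha)^{\beta-1}b\beta>\sin\tfrac\pi2(\beta-1)$, i.e. $b>A\sin\tfrac\pi2(\beta-1)$. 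I would also record that $\sin\tfrac\pi2(\beta-1)=-\cos\tfrac{\pi\beta}2=|\cos\tfrac{\pi\beta}2|\in(0,1)$, since $\tfrac{\pi\beta}2\in(\tfrac\pi2,\pi)$.

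For the analytic claim I would work with the indicator function $h(\theta)=\limsup_{r\to\infty}r^{-\beta}\log|F(re^{i\theta})|$. Because $F$ has order $\beta$ and finite type, $h$ is trigonometrically $\beta$-convex; the growth bound gives $h(\theta)\le A|\sin\theta|^\beta$ (the polynomial factors from Laplace's method are invisible to the $r^\beta$-indicator, which is precisely why I phrase everything through $h$), and the real-axis decay gives $h(0)\le-b$ and $h(\pi)\le-b$. The source of the constant is the $\beta$-homogeneous harmonic function $v(re^{i\theta})=r^\beta(p\cos\beta\theta+q\sin\beta\theta)$ determined by $v=-br^\beta$ on both real rays; solving the two linear conditions gives $p=-b$, $q=-b\tan\tfrac{\pi\beta}2$, and a short simplification yields
\[ v(ir)=\frac{b}{\,|\cos\tfrac{\pi\beta}2|\,}\,r^\beta . \]
Thus the trigonometric interpolation of the boundary decay "predicts'' the value $b/|\cos\tfrac{\pi\beta}2|$ on the imaginary axis, which must be reconciled with the growth cap $h(\tfrac\pi2)\le A$. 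Propagating the real-axis decay through the sectors $[0,\tfrac\pi2]$ and $[\tfrac\pi2,\pi]$ (each of opening $\tfrac\pi2<\tfrac\pi\beta$, so the sub-critical Phragmén--Lindelöf principle for order $\beta<2$ applies, after multiplying by the exponential of the appropriate branch of $z^\beta$), the strict inequality $b>A|\cos\tfrac{\pi\beta}2|$ makes this incompatible with the lower bound $\int_0^{2\pi}h(\theta)\,d\theta\ge0$ furnished by Jensen's formula for any $F\not\equiv0$ (apply the sub-mean-value inequality on circles, divide by $r^\beta$, and pass to the limit by reverse Fatou, the integrand being dominated by $A|\sin\theta|^\beta$). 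The resulting contradiction forces $h\equiv-\infty$, i.e. $F\equiv0$, whence $\hat f=F|_{\R}\equiv0$ and $f=0$ almost everywhere.

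The main obstacle is exactly this last Phragmén--Lindelöf/indicator step, and in particular obtaining the \emph{sharp} constant $\sin\tfrac\pi2(\beta-1)$ rather than a lossy one. Two difficulties must be handled with care: the order $\beta$ is strictly between $1$ and $2$, so a full half-plane is too wide for the maximum principle and the comparison must be localized to sectors of opening $<\pi/\beta$; and the comparison is run at the critical homogeneity, so it is the \emph{strict} inequality in the hypothesis that supplies the room to absorb the $\varepsilon$'s and the polynomial Laplace factors. The sharpness of the threshold is consistent with the complementary regime: when $b<A|\cos\tfrac{\pi\beta}2|$ the harmonic comparison no longer over-determines $F$, and one expects nonzero functions satisfying both bounds, so no such contradiction can hold there.
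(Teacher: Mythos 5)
Your first half is correct, and it is exactly the standard reduction: the pointwise bounds, the entire extension $F$ of $\hat f$, the Legendre/Young computation $\sup_t\bigl(t|y|-a|t|^\alpha\bigr)=|y|^\beta/\bigl(\beta(a\alpha)^{\beta-1}\bigr)=:A|y|^\beta$, and the bookkeeping showing that the hypothesis on the constants is precisely $b>A\sin\frac{\pi}{2}(\beta-1)$. This mirrors how the paper proceeds for its own Morgan-type result in Section \ref{sec6}. Be aware, however, that the paper never proves Theorem \ref{mor} at all: it is quoted from \cite{mor34}, and in the paper's Section \ref{sec6} the entire analytic core is outsourced to Lemma \ref{lem1}, which is cited from \cite{far03} without proof. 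So the only part of your proposal that actually requires an argument is your ``purely analytic claim,'' and that is where there is a genuine gap.

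The gap is that your mechanism for the contradiction --- trigonometric convexity of the indicator $h(\theta)=\limsup_{r\to\infty}r^{-\beta}\log|F(re^{i\theta})|$ applied on the quarter-sectors, played against Jensen's bound $\int_0^{2\pi}h(\theta)\,d\theta\ge 0$ --- provably cannot reach the sharp constant. Write $c_0=\sin\frac{\pi}{2}(\beta-1)=|\cos\frac{\pi\beta}{2}|$ and $s=\sin\frac{\pi\beta}{2}$. On $[0,\frac{\pi}{2}]$, trigonometric convexity with the endpoint data $h(0)\le -b$, $h(\frac{\pi}{2})\le A$ gives $h\le w_-$, where $w_-$ is the order-$\beta$ sinusoid with $w_-(0)=-b$, $w_-(\frac{\pi}{2})=A$; a direct computation gives
\[ \int_0^{\pi/2}w_-(\theta)\,d\theta=\frac{(1+c_0)(A-b)}{\beta s}, \]
and the same bound holds on the other three quarter-sectors, so your bounds yield only $\int_0^{2\pi}h\le \frac{4(1+c_0)(A-b)}{\beta s}$. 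This is negative only when $b>A$, whereas the theorem assumes only $b>Ac_0$ with $c_0\in(0,1)$: in the whole range $Ac_0<b\le A$ there is no contradiction with Jensen, so your scheme proves Morgan's theorem only with the non-sharp threshold $(a\alpha)^{1/\alpha}(b\beta)^{1/\beta}>1$ in place of $\bigl(\sin\frac{\pi}{2}(\beta-1)\bigr)^{1/\beta}$. (The multiplier variant you allude to, $F(z)e^{cz^\beta}$ plus sub-critical Phragm\'en--Lindel\"of, gives $h(\theta)\le \frac{b}{c_0}\cos\bigl(\beta(\theta-\frac{\pi}{2})\bigr)$ on $[0,\pi]$, whose integral is $+\frac{4bs}{\beta c_0}>0$; combining the two bounds pointwise still integrates to a positive quantity at the threshold.) The structural reason is visible in a derivative computation: with $w_+$ the mirror sinusoid on $[\frac{\pi}{2},\pi]$, one finds $w_-'(\frac{\pi}{2}^-)=\frac{\beta}{s}(b-Ac_0)$ and $w_+'(\frac{\pi}{2}^+)=-\frac{\beta}{s}(b-Ac_0)$, so $b=Ac_0$ is exactly the point at which the broken majorant $\min(w_-,w_+)$ stops being trigonometrically convex \emph{across} $\theta=\frac{\pi}{2}$. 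In other words, the sharp constant lives in the convexity of $h$ across the imaginary axis, which your argument discards by treating the quarter-sectors separately and then passing to an integral functional that cannot detect it. Closing this gap requires the genuinely different argument of Morgan and of \cite{far03} (Phragm\'en--Lindel\"of with $e^{cz^\beta}$ multipliers, iterated so as to enlarge the decay sector until the indicator inequality $h(\theta)+h(\theta+\pi/\beta)\ge 0$ is violated); that argument is precisely the content of the paper's black-box Lemma \ref{lem1}, and neither the paper nor your proposal actually supplies it.
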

A generalization of this theorem was obtained by Ben Farah and Mokni \cite{far03} where they proved an $L^p$ -- $L^q$-version of Morgan's theorem. For a more detailed study of uncertainty principles, we refer to the book of Havin and J\"oricke \cite{hav94}.

Considerable attention has been devoted to discovering generalizations to new contexts for the Cowling--Price's, Hardy's and Morgan's uncertainty principles. For instance, these theorems were obtained in \cite{mej16} for the generalized Fourier transform and in \cite{ray04} for symmetric spaces. Also, an $L^p$ version of Hardy's theorem was obtained for the Dunkl transform in \cite{gal04} and for motion groups in \cite{egu00}. As a generalization of Euclidean uncertainty principles for the Fourier transform, Daher et al. \cite{dah12} have obtained some uncertainty principles for the Cherednik transform. These theorems are further extended to the Opdam--Cherednik transform in \cite{mej014} using classical uncertainty principles for the Fourier transform and composition properties of the Opdam--Cherednik transform. However, upto our knowledge, these types of uncertainty principles have not been studied in the case of the modulation spaces. In this paper, we attempt to prove the Cowling--Price's, Hardy's and Morgan's uncertainty principles for the Opdam--Cherednik transform on modulation spaces associated with this transform.

The motivation to prove these uncertainty principles for the Opdam--Cherednik transform on modulation spaces arises from the classical uncertainty principles for the Fourier transform on the Lebesgue spaces. Since the last decade modulation spaces have found to be very fruitful in various current trends (e.g., pseudo-differential operators,  partial differential equations, etc..) of investigation and have been widely used in several fields in analysis, physics and engineering.  Uncertainty principles have implications in two main areas: quantum mechanics and signal analysis, and modulation spaces are widely used in these areas. We hope that the study of uncertainty principles for the modulation spaces makes a significant impact in these areas. Another important motivation to study the Jacobi--Cherednik operators arises from their relevance in the algebraic description of exactly solvable quantum many-body systems of Calogero--Moser--Sutherland type (see \cite{die00, hik96}) and they provide a useful tool in the study of special functions with root systems (see \cite{dun92, hec91}). These describe algebraically integrable systems in one dimension and have gained considerable interest in mathematical physics. Other motivation for the investigation of the Jacobi--Cherednik operator and the Opdam--Cherednik transform is to generalize the previous subjects which are bound with the physics. For a more detailed discussion, we refer to \cite{mej16}.

Since modulation spaces are much larger spaces than the Lebesgue spaces, can we determine the functions $f$ satisfying the conditions of Theorems \ref{Hardy}, or \ref{Cow-Pri}, or  \ref{mor} for the modulation spaces? In this paper, we answer these questions. The common key to obtaining extensions of uncertainty principles for the Opdam--Cherednik transform is a slice formula, that is, this transform is decomposed as a composition of the classical Fourier transform and the Jacobi--Cherednik intertwining operator (see \cite{mej014}). However, without using a slice formula for the Opdam--Cherednik transform we give the analogue of the uncertainty principles within the framework of the Opdam--Cherednik transform by using an estimate of the heat kernel, which obtained in \cite{fit89}. Here, we consider the modulation spaces associated with the Opdam--Cherednik transform, as the standard modulation spaces are not suited to this transform. In this paper, we prove the uncertainty principles by using the properties of the heat kernel associated with the Jacobi--Cherednik operator and the versions of the Phragm{\'e}n--Lindl{\"o}f type result for the modulation spaces.
The two lemmas,  Lemma \ref{lem4} for Cowling--Price's theorem and Lemma \ref{lem2} for Morgan's theorem are essential. Since these lemmas hold for the modulation spaces associated with the Opdam--Cherednik transform, which also satisfies H\"older's inequality, we can apply the classical process to obtain uncertainty principles for the Opdam--Cherednik transform on modulation spaces.

The paper is organized as follows. In Section \ref{sec2}, we recall some basic facts about the Jacobi--Cherednik operator and we discuss the main results for the Opdam--Cherednik transform. We also give some properties of the heat kernel associated with the Jacobi--Cherednik operator. In Section \ref{sec3}, we discuss the modulation spaces associated with the Opdam--Cherednik transform. In Section \ref{sec4}, we give the Phragm{\'e}n--Lindl{\"o}f type result for the modulation spaces and using it we prove an $M^p$ -- $M^q$-version of Cowling--Price's theorem for the Opdam--Cherednik transform. In Section \ref{sec5}, an analogue of the classical Hardy's theorem is obtained for the Opdam--Cherednik transform on modulation spaces associated with this transform. Finally, in Section \ref{sec6}, we obtain another version of the Phragm{\'e}n--Lindl{\"o}f type result for the modulation spaces and we prove an $M^p$ -- $M^q$-version of Morgan's theorem for the Opdam--Cherednik transform.

\section{Harmonic analysis and the Opdam--Cherednik transform}\label{sec2}

In this section, we collect the necessary definitions and results from the harmonic analysis related to the Opdam--Cherednik transform. The main references for this section are \cite{and15, mej14, opd95, opd00, sch08}. However, we will use the same notation as in \cite{joh15}.

Let $T_{\alpha, \beta}$ denote the Jacobi--Cherednik differential--difference operator (also called the Dunkl--Cherednik operator)
\[T_{\alpha, \beta} f(x)=\frac{d}{dx} f(x)+ \Big[ 
(2\alpha + 1) \coth x + (2\beta + 1) \tanh x \Big] \frac{f(x)-f(-x)}{2} - \rho f(-x), \]
where $\alpha, \beta$ are two parameters satisfying $\alpha \geq \beta \geq -\frac{1}{2}$ and $\alpha > -\frac{1}{2}$, and $\rho= \alpha + \beta + 1$. Let $\lambda \in \C$. The Opdam hypergeometric functions $G^{\alpha, \beta}_\lambda$ on $\R$ are eigenfunctions $T_{\alpha, \beta} G^{\alpha, \beta}_\lambda(x)=i \lambda  G^{\alpha, \beta}_\lambda(x)$ of $T_{\alpha, \beta}$ that are normalized such that $G^{\alpha, \beta}_\lambda(0)=1$. The eigenfunction $G^{\alpha, \beta}_\lambda$ is given by
\[G^{\alpha, \beta}_\lambda (x)= \varphi^{\alpha, \beta}_\lambda (x) - \frac{1}{\rho - i \lambda} \frac{d}{dx}\varphi^{\alpha, \beta}_\lambda (x)=\varphi^{\alpha, \beta}_\lambda (x)+ \frac{\rho+i \lambda}{4(\alpha+1)} \sinh 2x \; \varphi^{\alpha+1, \beta+1}_\lambda (x),  \]
where $\varphi^{\alpha, \beta}_\lambda (x)={}_2F_1 \left(\frac{\rho+i \lambda}{2}, \frac{\rho-i \lambda}{2} ; \alpha+1; -\sinh^2 x \right) $ is the classical Jacobi function.

For every $ \lambda \in \C$ and $x \in  \R$, the eigenfunction
$G^{\alpha, \beta}_\lambda$ satisfy
\[ |G^{\alpha, \beta}_\lambda(x)| \leq C \; e^{-\rho |x|} e^{|\text{Im} (\lambda)| |x|},\] 
where $C$ is a positive constant. Since $\rho > 0$, we have
\begin{equation}\label{eq1}
|G^{\alpha, \beta}_\lambda(x)| \leq C \; e^{|\text{Im} (\lambda)| |x|}. 
\end{equation}
Let us denote by $C_c (\R)$ the space of continuous functions on $\R$ with compact support.
\begin{Def}
Let $\alpha \geq \beta \geq -\frac{1}{2}$ with $\alpha > -\frac{1}{2}$. The Opdam--Cherednik transform $\mathcal{H} f$ of a function $f \in C_c(\R)$ is defined by
\[ \H f (\lambda)=\int_{\R} f(x)\; G^{\alpha, \beta}_\lambda(-x)\; A_{\alpha, \beta} (x) dx \quad \text{for all } \lambda \in \C, \] 
where $A_{\alpha, \beta} (x)= (\sinh |x| )^{2 \alpha+1} (\cosh |x| )^{2 \beta+1}$. The inverse Opdam--Cherednik transform for a suitable function $g$ on $\R$ is given by
\[ \H^{-1} g(x)= \int_{\R} g(\lambda)\; G^{\alpha, \beta}_\lambda(x)\; d\sigma_{\alpha, \beta}(\lambda) \quad \text{for all } x \in \R, \]
where $$d\sigma_{\alpha, \beta}(\lambda)= \left(1- \dfrac{\rho}{i \lambda} \right) \dfrac{d \lambda}{8 \pi |C_{\alpha, \beta}(\lambda)|^2}$$ and 
$$C_{\alpha, \beta}(\lambda)= \dfrac{2^{\rho - i \lambda} \Gamma(\alpha+1) \Gamma(i \lambda)}{\Gamma \left(\frac{\rho + i \lambda}{2}\right)\; \Gamma\left(\frac{\alpha - \beta+1+i \lambda}{2}\right)}, \quad \lambda \in \C \setminus i \mathbb{N}.$$
\end{Def}

The Plancherel formula is given by 
\begin{equation}\label{eq03}
\int_{\R} |f(x)|^2 A_{\alpha, \beta}(x) dx=\int_\R \H f(\lambda) \overline{\H \check{f}(-\lambda)} \; d \sigma_{\alpha, \beta} (\lambda),
\end{equation}
where $\check{f}(x):=f(-x)$.

Let $L^p(\R,A_{\alpha, \beta} )$ (resp. $L^p(\R, \sigma_{\alpha, \beta} )$), $p \in [1, \infty] $, denote the $L^p$-spaces corresponding to the measure $A_{\alpha, \beta}(x) dx$ (resp. $d | \sigma_{\alpha, \beta} |(x)$). The Schwartz space $\S_{\alpha, \beta}(\R )=(\cosh x )^{-\rho} \S(\R)$ is defined as the space of all differentiable functions $f$ such that 
$$ \sup_{x \in \R} \; (1+|x|)^m e^{\rho |x|} \left|\frac{d^n}{dx^n} f(x) \right|<\infty,$$ 
for all $m, n \in \N_0 = \N \cup \{0\}$, equipped with the obvious seminorms. The Opdam--Cherednik transform $\H$ and its inverse $\H^{-1}$ are topological isomorphisms between the space $\S_{\alpha, \beta}(\R )$ and the space $\S(\R)$ (see \cite{sch08}, Theorem 4.1).

Let $t > 0$. The heat kernel $E^{\alpha, \beta}_t$ associated with the Jacobi--Cherednik operator is defined by
\begin{equation}\label{eq04}
E^{\alpha, \beta}_t(x)=\H^{-1}(e^{-t \lambda^2})(x) \quad \text{for all } x \in \R.
\end{equation}
For all $t > 0$, $E^{\alpha, \beta}_t$ is an $C^\infty$-function on $\R$. Moreover, for all $t > 0$ and all $\lambda \in \R$, we have
\begin{equation}\label{eq05}
\H (E^{\alpha, \beta}_t) (\lambda)=e^{-t \lambda^2}.
\end{equation}
We refer to \cite{cho03} for further properties of the heat kernel $E^{\alpha, \beta}_t$. From (\cite{fit89}, Theorem 3.1), there exist two real numbers $\mu_1$ and $\mu_2$, such that
\begin{equation}\label{eq06}
\frac{e^{\mu_1 t}}{2^{2\alpha+1} \Gamma(\alpha+1) t^{\alpha+1}} \frac{e^{-\frac{x^2}{4t}}}{\sqrt{B_{\alpha, \beta}(x)}} \leq E^{\alpha, \beta}_t(x) \leq  \frac{e^{\mu_2 t}}{2^{2\alpha+1} \Gamma(\alpha+1) t^{\alpha+1}} \frac{e^{-\frac{x^2}{4t}}}{\sqrt{B_{\alpha, \beta}(x)}}, \quad \forall x \in \R,
\end{equation}
where $B_{\alpha, \beta} (x)= (\sinh |x|/|x| )^{2 \alpha+1} (\cosh |x| )^{2 \beta+1}$ for all $x \in \R \setminus \{0\}$ and $B_{\alpha, \beta}(0)=1$. Also, we have $A_{\alpha, \beta}(x)= |x|^{2\alpha+1} B_{\alpha, \beta}(x)$ and for all $x \in \R$, $B_{\alpha, \beta} (x) \geq 1$.

\section{Modulation spaces associated with the Opdam--Cherednik transform}\label{sec3}
The modulation spaces were introduced by Feichtinger \cite{fei03, fei97}, by imposing integrability conditions on the short-time Fourier transform (STFT) of tempered distributions. More specifically, for $x, w \in \R$, let $M_w$ and $T_x$ denote the operators of modulation and translation. Then, the STFT of a function $f$ with respect to a window function $g \in  \S(\R)$ is defined by
\[ V_g f (x,w)=\langle f, M_w T_x g \rangle=\int_{\R} f(t) \overline{g(t-x)} e^{-2 \pi i w t} dt, \quad (x,y) \in \R^2. \] 

Here we are interested in modulation spaces with respect to measure $A_{\alpha, \beta}(x) dx$.  
\begin{Def}
Fix a non-zero window $g \in \mathcal{S}(\R)$, and $1 \leq p,q \leq \infty$. Then the modulation space $M^{p,q}(\R, A_{\alpha, \beta})$  consists of all tempered distributions $f \in \mathcal{S'}(\R)$ such that $V_g f \in L^{p,q}(\R^2, A_{\alpha, \beta})$. The norm on $M^{p,q}(\R, A_{\alpha, \beta})$ is 
\begin{eqnarray*}
\Vert f \Vert_{M^{p,q}(\R, A_{\alpha, \beta})}
&=& \Vert V_g f \Vert_{L^{p,q}(\R^2, A_{\alpha, \beta})} \\
&=& \bigg( \int_{\R} \bigg( \int_{\R} |V_gf(x,w)|^p A_{\alpha, \beta}(x) dx \bigg)^{q/p} A_{\alpha, \beta}(w) dw \bigg)^{1/q} < \infty,
\end{eqnarray*}
with the usual adjustments if $p$ or $q$ is infinite. If $p=q$, then we write $M^p(\R, A_{\alpha, \beta})$ instead of $M^{p,p}(\R, A_{\alpha, \beta})$. Also,  we denote by $M^p(\R, \sigma_{\alpha, \beta})$ the modulation space corresponding to the measure $d |\sigma_{\alpha, \beta}|(x)$ and $M^p(\R)$ the modulation space corresponding to the Lebesgue measure $dx$.   
\end{Def}
The definition of $M^{p, q}(\R, A_{\alpha, \beta})$ is independent of the choice of $g$ in the sense that each different choice of $g$ defines an equivalent norm on $M^{p, q}(\R, A_{\alpha, \beta})$. Each modulation space is a Banach space. For $p=q=2$, we have that $M^2(\R, A_{\alpha, \beta}) =L^2(\R, A_{\alpha, \beta}).$ For other $p=q$, the space $M^p(\R, A_{\alpha, \beta})$ is not $L^p(\R, A_{\alpha, \beta})$. In fact for $p=q>2$, the space $M^p(\R, A_{\alpha, \beta})$ is a superset of $L^2(\R, A_{\alpha, \beta})$. We have the following inclusion 
\[ \mathcal{S}(\R) \subset M^1(\R, A_{\alpha, \beta}) \subset M^2(\R, A_{\alpha, \beta})=L^2(\R, A_{\alpha, \beta}) \subset M^\infty(\R, A_{\alpha, \beta}) \subset \mathcal{S'}(\R). \]
In particular, we have $M^p(\R, A_{\alpha, \beta}) \hookrightarrow L^p(\R, A_{\alpha, \beta})$ for $1 \leq p \leq 2$, and  $L^p(\R, A_{\alpha, \beta}) \hookrightarrow M^p(\R, A_{\alpha, \beta})$ for $2 \leq p \leq \infty$. Furthermore, the dual of a modulation space is also a modulation space, if $p < \infty$, $q < \infty$, $(M^{p, q}(\R, A_{\alpha, \beta}))^{'} =M^{p', q'}(\R, A_{\alpha, \beta})$, where $p', \; q'$ denote the dual exponents of $p$ and $q$, respectively. We refer to Gr\"ochenig's book \cite{gro01} for further properties and uses of modulation spaces. 

\section{Cowling--Price's theorem for the Opdam--Cherednik transform}\label{sec4}
We begin this section with the following lemma which we need for the proof of the next result.
\begin{lemma}\label{lem3}
If $f(t)=1$ and $g(t)=e^{-\pi t^2}$, then
\[ V_g f (x, w) =e^{- 2 \pi i w x} \; e^{-\pi w^2}. \]
Also for $p\in [1, \infty)$ and $\rho, \sigma >0$, we have
\[ \|f\|_{M^p([\rho \sigma, \; \rho (\sigma+1)])} \leq  \rho^{\frac{2}{p}}.  \]
\end{lemma}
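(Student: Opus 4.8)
The plan is to treat the two assertions separately, both by a direct computation from the definition of the short-time Fourier transform.

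For the first identity, I would substitute $f(t)=1$ and the real, even Gaussian window $g(t)=e^{-\pi t^2}$ directly into
\[ V_g f(x,w)=\int_{\R} f(t)\,\overline{g(t-x)}\,e^{-2\pi i w t}\,dt, \]
so that $\overline{g(t-x)}=e^{-\pi(t-x)^2}$. Making the change of variable $u=t-x$ pulls the factor $e^{-2\pi i w x}$ out of the integral and leaves $\int_{\R} e^{-\pi u^2}e^{-2\pi i w u}\,du$. The single analytic input is the classical fact that, in this normalization, the Gaussian $e^{-\pi u^2}$ is its own Fourier transform, so that this integral equals $e^{-\pi w^2}$. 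Recombining the two factors yields $V_g f(x,w)=e^{-2\pi i w x}\,e^{-\pi w^2}$, as claimed.

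For the second estimate, I would first read off from the identity just proved that $|V_g f(x,w)|=e^{-\pi w^2}$, which is bounded above by $1$ and is independent of $x$. Interpreting $\Vert\cdot\Vert_{M^p([\rho\sigma,\rho(\sigma+1)])}$ as the modulation-space norm with Lebesgue measure, restricted so that both time--frequency variables $(x,w)$ range over the square $[\rho\sigma,\rho(\sigma+1)]^2$, the $p$-th power of the norm is
\[ \Vert f\Vert_{M^p([\rho\sigma,\rho(\sigma+1)])}^p=\int_{\rho\sigma}^{\rho(\sigma+1)}\int_{\rho\sigma}^{\rho(\sigma+1)} e^{-p\pi w^2}\,dx\,dw. \]
Since $e^{-p\pi w^2}\le 1$ and each interval has length $\rho(\sigma+1)-\rho\sigma=\rho$, the double integral is at most $\rho\cdot\rho=\rho^2$. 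Taking $p$-th roots gives $\Vert f\Vert_{M^p([\rho\sigma,\rho(\sigma+1)])}\le\rho^{2/p}$.

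Both parts are essentially bookkeeping, so there is no genuine obstacle; the only points that need care are the Fourier-normalization convention that makes $e^{-\pi t^2}$ self-dual (any other convention would alter the constant), and the correct reading of the restricted modulation norm as an integral over a two-dimensional box. It is precisely the two-dimensionality of the square that produces the exponent $2/p$ rather than $1/p$. Crucially, bounding $e^{-p\pi w^2}$ by $1$ rather than computing the exact Gaussian integral is the form one wants downstream, since it is the uniform bound $\rho^{2/p}$ that behaves well when summed over the index $\sigma$ in the Cowling--Price argument.
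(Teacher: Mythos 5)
Your proposal is correct and follows essentially the same route as the paper's own proof: the change of variables $u=t-x$ plus the self-duality of $e^{-\pi t^2}$ under the Fourier transform for the first identity, and the uniform bound $e^{-p\pi w^2}\le 1$ integrated over the square $[\rho\sigma,\rho(\sigma+1)]^2$ for the norm estimate. No gaps; nothing further is needed.
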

\begin{proof}
\begin{eqnarray*}
V_g f (x, w) 
&=& \int_{\R} e^{-\pi (t-x)^2} e^{- 2 \pi i w t} dt \\
&=& \int_{\R} e^{-\pi s^2} e^{- 2 \pi i w (s+x)} ds \\
&=& e^{- 2 \pi i w x} \int_{\R} e^{-\pi s^2} e^{- 2 \pi i w s} ds \\
&=& e^{- 2 \pi i w x} \; e^{-\pi w^2}. 
\end{eqnarray*}
Also, we have
\begin{eqnarray*}
\|f\|_{M^p([\rho \sigma, \; \rho (\sigma+1)])}
&=& \| V_g f \|_{L^p([\rho \sigma, \; \rho (\sigma+1)]\times [\rho \sigma, \; \rho (\sigma+1)])} \\
&=& \left( \int_{\rho \sigma}^{\rho (\sigma+1)} \int_{\rho \sigma}^{\rho (\sigma+1)} e^{- \pi p w^2} dx dw \right)^{\frac{1}{p}}\\
& \leq &  \left( \int_{\rho \sigma}^{\rho (\sigma+1)} \int_{\rho \sigma}^{\rho (\sigma+1)} dx dw \right)^{\frac{1}{p}}\\
&=&  \rho^{\frac{2}{p}}. 
\end{eqnarray*}
\end{proof}

We obtain the following lemma of Phragm{\'e}n--Lindl{\"o}f type using the same technique as in \cite{cow83}. This lemma plays a crucial role in the proof of the next theorem, which is an $M^p$ -- $M^q$-version of Cowling--Price's theorem for the Opdam--Cherednik transform. An $L^p$-version of the following lemma proved in \cite{cow83} but here we prove the lemma for the modulation space $M^p(\R, \sigma_{\alpha, \beta})$ and obtain a different estimate.

\begin{lemma}\label{lem4}
Suppose that $g$ is analytic in the region $ Q = \{re^{i \theta} : r>0, \;0 < \theta < \frac{\pi}{2}\}$ and continuous on the closure $\bar{Q}$ of $Q$. Suppose also that for $p\in [1, \infty)$ and constants $A, a > 0$,
\[|g(x + iy)| \leq A \; e^{ax^2} \quad \text{for} \;\; x + iy \in \bar{Q} , \]
and
\[  \| g_{|\R} \|_{M^p(\R, \sigma_{\alpha, \beta})} \leq A. \]
Then
\[ \int_\sigma^{\sigma+1} |g(\rho e^{i \psi} )| \; d\rho \leq  A \; \max \left\{e^a , (\sigma + 1)^{\frac{2}{p}-1} \right\} \]
for $\psi \in [0, \frac{\pi}{2}]$ and $\sigma \in \R^+$. 
\end{lemma}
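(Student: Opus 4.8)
The plan is to prove this by a Phragmén–Lindelöf argument on the quarter-plane $Q$, bounding $g$ on the closed sector radial arc by interpolating between the growth bound on the two boundary rays (the positive real axis and the positive imaginary axis). The key is to build a comparison (majorant) function that controls the exponential growth $e^{ax^2}$ while remaining bounded on the imaginary axis, then apply the maximum principle.

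\begin{proof}[Proof sketch]
The idea is to reduce the analyticity/growth hypothesis on $Q$ to a pointwise bound on the arc $\{\rho e^{i\psi} : \rho \in [\sigma, \sigma+1]\}$, and then integrate.

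First I would record the two boundary estimates. On the real axis, from the growth hypothesis we only have $|g(x)| \le A\, e^{ax^2}$, but this is too weak by itself; the genuine information on $\R$ is the modulation-space bound $\|g_{|\R}\|_{M^p(\R,\sigma_{\alpha,\beta})} \le A$. The natural first step is to extract from this norm bound a local integral estimate: using H\"older's inequality (valid in these modulation spaces, as emphasized in the introduction) together with the measure of the dyadic-type interval $[\sigma, \sigma+1]$ and the elementary computation in Lemma \ref{lem3}, one obtains a bound of the form $\int_\sigma^{\sigma+1}|g(\rho)|\,d\rho \le A\,(\sigma+1)^{\frac{2}{p}-1}$ along the real ray. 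On the imaginary axis ($\theta = \pi/2$, so $x=0$) the growth bound gives $|g(iy)| \le A$ directly.

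Next I would introduce an auxiliary function to run Phragmén–Lindelöf. Writing $z = \rho e^{i\theta}$, set $h(z) = g(z)\, e^{-i c z^2}$ (or a variant with $e^{-az^2}$ rotated appropriately), chosen so that the factor $e^{-icz^2}$ cancels the $e^{ax^2}$ growth on the real ray while staying bounded on the imaginary ray; since $\mathrm{Re}(z^2) = \rho^2\cos 2\theta$ changes sign across $\theta = \pi/4$, the multiplier must be tuned so that $|h|$ is controlled on both bounding rays of the sector. One then verifies that $h$ is of admissible order in $Q$ (at most order $2$, which is exactly the borderline case the quarter-plane can accommodate), so the Phragmén–Lindelöf principle applies and yields a bound on $|h|$, hence on $|g(\rho e^{i\psi})|$, throughout $\bar Q$ of the form $A\,\max\{e^a, (\sigma+1)^{\frac{2}{p}-1}\}$ for the relevant range of $\rho$.

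The main obstacle I anticipate is the growth-order bookkeeping in the Phragmén–Lindelöf step: because $g$ is only of order $2$ on a sector of opening $\pi/2$, the classical principle is exactly at its threshold, so the auxiliary multiplier and the indicator-function estimate must be handled carefully to avoid an illegitimate application. A secondary technical point is replacing the pointwise $L^\infty$-type reasoning of the classical Cowling--Price argument by the modulation-space norm bound; this is where Lemma \ref{lem3} and H\"older's inequality enter, producing the $(\sigma+1)^{\frac{2}{p}-1}$ factor rather than a constant, which is precisely the ``different estimate'' the statement advertises. Once the sector bound is in hand, integrating over $\rho \in [\sigma, \sigma+1]$ and taking the maximum over the two competing terms $e^a$ and $(\sigma+1)^{\frac{2}{p}-1}$ gives the claimed inequality for all $\psi \in [0, \frac{\pi}{2}]$.
\end{proof}
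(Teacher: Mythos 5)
Your overall strategy --- a Cowling--Price multiplier plus a Phragm{\'e}n--Lindel{\"o}f argument, with Lemma \ref{lem3} and H{\"o}lder's inequality supplying the $(\sigma+1)^{\frac{2}{p}-1}$ factor coming from the real axis --- is the same as the paper's, but there is a genuine gap at the central analytic step. You propose to run Phragm{\'e}n--Lindel{\"o}f on $h = g\cdot(\text{multiplier})$ so as to obtain a \emph{pointwise} bound $|g(\rho e^{i\psi})| \leq A \max\{e^a,(\sigma+1)^{\frac{2}{p}-1}\}$ throughout $\bar Q$, and only afterwards integrate in $\rho$. The maximum principle cannot deliver this: it requires sup bounds on the entire boundary of the sector, and on the real axis the only pointwise information available is $|g(x)|\leq A e^{ax^2}$, which grows; the modulation-norm hypothesis controls only \emph{averages} of $|g|$ over intervals, since $M^p$ with $p<\infty$ does not embed into $L^\infty$. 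So the claimed pointwise boundary control, and hence the claimed pointwise interior bound, is not available. The paper (following Cowling--Price) circumvents exactly this by integrating \emph{first}: one forms the analytic function $F(z)=\int_\sigma^{\sigma+1} f(\tau z)\,d\tau$, where $f(z)=g(z)\exp\left(i\varepsilon e^{i\varepsilon}z^{(\pi-2\varepsilon)/\theta}+ia\cot(\theta)z^2/2\right)$. The H{\"o}lder/Lemma \ref{lem3} computation then gives a genuine sup bound on the real edge, namely $|F(\rho)| \leq \frac{1}{\rho}\int_{\rho\sigma}^{\rho(\sigma+1)}|f(\tau)|\,d\tau \leq A(\sigma+1)^{\frac{2}{p}-1}$ for $\rho > (\sigma+1)^{-1}$; the multiplier makes $|f|\leq A$ on the ray $\arg z=\theta$ (hence a sup bound on $F$ there); and the maximum principle applied to $F$ on the sub-sector --- legitimate thanks to the $\varepsilon$-regularized exponent $z^{(\pi-2\varepsilon)/\theta}$, which is what handles the critical order-$2$ growth you rightly worried about --- yields the integrated conclusion directly. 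The fix is therefore to reverse your order of operations: integrate radially to build $F$, then apply the maximum principle to $F$, not to $g$ or $h$.

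Two secondary points. First, your H{\"o}lder/Lemma \ref{lem3} step treats the hypothesis as if it were a bound in $M^p(\R)$, but the hypothesis is a bound in the weighted space $M^p(\R,\sigma_{\alpha,\beta})$; the paper begins by converting it into a bound on the Lebesgue-measure modulation norm using the estimate $|C_{\alpha,\beta}(\lambda)|^{-2}\geq k_1|\lambda|^{2\alpha+1}\geq k_1$ for $|\lambda|\geq 1$, and without this reduction Lemma \ref{lem3} (stated for Lebesgue measure on intervals) cannot be paired with the hypothesis. Second, your multiplier discussion has the roles of the two rays interchanged: in the Cowling--Price construction the factor $\exp(ia\cot(\theta)z^2/2)$ is unimodular on the real axis (so the averaged bound there is preserved) and cancels the $e^{ax^2}$ growth on the ray $\arg z = \theta$, with the full quadrant swept out by letting $\theta \uparrow \pi/2$; it is not designed to kill growth on the real ray while remaining bounded on the imaginary ray.
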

\begin{proof}
Using the definition of $M^p(\R, \sigma_{\alpha, \beta})$ and the fact that there is a constant $k_1 > 0$ such that $|C_{\alpha, \beta} (\lambda)|^{-2} \geq k_1 |\lambda|^{2 \alpha +1}$ for all $\lambda \in \R$  with $|\lambda| \geq 1$ (see \cite{tri97}, page 157), we get
\begin{eqnarray*}
&& \| g_{|\R} \|^p_{M^p(\R, \sigma_{\alpha, \beta})} = \| V_h g \|^p_{L^p(\R^2, \sigma_{\alpha, \beta})} \\
&& \geq  \int_{|\mu| \geq 1} \int_{|\lambda| \geq 1} |V_h g(\lambda, \mu)|^p \; d |\sigma_{\alpha, \beta}|(\lambda) \; d |\sigma_{\alpha, \beta}|(\mu) \\
&& = \int_{|\mu| \geq 1} \int_{|\lambda| \geq 1} |V_h g(\lambda, \mu)|^p \;\left|1-\frac{\rho}{i\lambda}\right| \frac{d\lambda}{8 \pi |C_{\alpha, \beta} (\lambda)|^2}  \; \left|1-\frac{\rho}{i\mu}\right| \frac{d\mu}{8 \pi |C_{\alpha, \beta} (\mu)|^2} \\
&& \geq \frac{1}{64\pi^2} \int_{|\mu| \geq 1} \int_{|\lambda| \geq 1} |V_h g(\lambda, \mu)|^p \; \frac{d\lambda}{|C_{\alpha, \beta} (\lambda)|^2}  \;  \frac{d\mu}{|C_{\alpha, \beta} (\mu)|^2}\\
&& \geq \frac{k_1^2}{64\pi^2} \int_{|\mu| \geq 1} \int_{|\lambda| \geq 1} |V_h g(\lambda, \mu)|^p  |\lambda|^{2 \alpha +1} d\lambda \; |\mu|^{2 \alpha +1} d\mu \\
&& \geq \frac{k_1^2}{64\pi^2} \int_{|\mu| \geq 1} \int_{|\lambda| \geq 1} |V_h g(\lambda, \mu)|^p \;  d\lambda d\mu,
\end{eqnarray*}
where $h \in \mathcal{S}(\R)$. This shows that $\| g_{|\R} \|_{M^p(\R)} \leq A$. Now, we define a function $f$ on $\bar{Q}$ by $ f(z)=g(z) \exp (i \varepsilon e^{i \varepsilon} z^{\frac{(\pi - 2 \varepsilon)}{\theta}}+i a \cot (\theta) z^2/2 )$ for $\theta \in (0, \pi/2)$ and $\varepsilon \in (0, \pi/2-\theta)$. We apply the same method as in \cite{cow83} to the function $f$ to get the estimate. By using H{\"o}lder's inequality for $M^p$ and Lemma \ref{lem3}, for $\rho > (\sigma+1)^{-1}$ we obtain 
\begin{eqnarray*}
\int_\sigma^{\sigma+1} |f(\rho \tau)| d \tau 
= \frac{1}{\rho} \int_{\rho \sigma}^{\rho (\sigma+1)} |f( \tau)| d\tau 
&\leq & \frac{1}{\rho}\; \|f\|_{M^p([\rho \sigma, \; \rho (\sigma+1)])} \|1 \|_{M^q([\rho \sigma, \; \rho (\sigma+1)])} \\
& \leq & \rho^{\frac{2}{q}-1} \|g\|_{M^p([\rho \sigma, \; \rho (\sigma+1)])} \\
& \leq & A \; (\sigma+1)^{\frac{2}{p}-1}.
\end{eqnarray*}
Now the rest of the proof follows similarly as in \cite{cow83}.
\end{proof} 

\begin{theorem}\label{th2}
Let $1 \leq p, q \leq \infty$ with at least one of them finite.
Suppose that $f$ is a measurable function on $\R$ such that 
\begin{eqnarray}\label{eq5}
e^{a x^2} f \in M^p(\R, A_{\alpha, \beta}) \quad \text{and} \quad  e^{b \lambda^2} \H f \in M^q(\R, \sigma_{\alpha, \beta}),
\end{eqnarray}
for some constants $a, b > 0$. Then the following conclusions hold:
\begin{enumerate}
\item[(i)] If $ab \geq \frac{1}{4}$, then $f = 0$ almost
everywhere.

\item[(ii)] If $ab < \frac{1}{4}$, then for all $t \in (b, \frac{1}{4a})$, the functions $f = E^{\alpha, \beta}_t$ satisfy the relations $(\ref{eq5})$.
\end{enumerate}
\end{theorem}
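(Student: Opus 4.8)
\emph{Strategy.} The plan is to dispatch the easy part (ii) by a direct computation with the heat kernel, and to reduce the vanishing assertion (i) to the Phragm\'en--Lindel\"of estimate of Lemma~\ref{lem4} applied to $e^{b\lambda^{2}}\H f$. For (ii), I would fix $t\in(b,\tfrac1{4a})$ and set $f=E^{\alpha,\beta}_{t}$. By the upper estimate in \eqref{eq06}, $e^{ax^{2}}E^{\alpha,\beta}_{t}(x)\le C\,e^{(a-\frac1{4t})x^{2}}/\sqrt{B_{\alpha,\beta}(x)}$, and $t<\tfrac1{4a}$ makes the exponent negative, so this is a Gaussian--decaying smooth function, hence lies in $\S(\R)\subset M^{1}(\R,A_{\alpha,\beta})\subset M^{p}(\R,A_{\alpha,\beta})$. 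By \eqref{eq05}, $\H E^{\alpha,\beta}_{t}(\lambda)=e^{-t\lambda^{2}}$, so $e^{b\lambda^{2}}\H E^{\alpha,\beta}_{t}(\lambda)=e^{-(t-b)\lambda^{2}}$ with $t>b$; this is again Gaussian--decaying and lies in $\S(\R)\subset M^{q}(\R,\sigma_{\alpha,\beta})$. Thus each such $E^{\alpha,\beta}_{t}$ satisfies \eqref{eq5}, and letting $t$ range over $(b,\tfrac1{4a})$ (nonempty precisely when $ab<\tfrac14$) produces the required functions.

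\emph{Part (i), analytic continuation.} Assuming $ab\ge\tfrac14$, I would first continue $\H f$. From \eqref{eq1}, $|\H f(\lambda)|\le C\int_{\R}|f(x)|\,e^{|\operatorname{Im}\lambda|\,|x|}A_{\alpha,\beta}(x)\,dx$. Splitting $|f|\,e^{s|x|}=(e^{ax^{2}}|f|)(e^{-ax^{2}+s|x|})$ with $s=|\operatorname{Im}\lambda|$, applying H\"older's inequality in the modulation spaces (valid for these weighted spaces) and the embedding $M^{1}(\R,A_{\alpha,\beta})\hookrightarrow L^{1}(\R,A_{\alpha,\beta})$ gives
\[
\int_{\R}|f(x)|\,e^{s|x|}A_{\alpha,\beta}(x)\,dx\le C\,\|e^{ax^{2}}f\|_{M^{p}(\R,A_{\alpha,\beta})}\,\|e^{-ax^{2}+s|x|}\|_{M^{p'}(\R,A_{\alpha,\beta})}<\infty
\]
for every $s\ge0$ and \emph{every} $p\in[1,\infty]$, so $\H f$ extends to an entire function. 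Estimating the last norm by the peak of the shifted Gaussian yields $|\H f(\lambda)|\le C\,e^{(\operatorname{Im}\lambda)^{2}/(4a)}\,\omega(|\operatorname{Im}\lambda|)$, where $\omega$ is of order strictly less than $2$ (an exponential--linear times polynomial factor coming from the weight $A_{\alpha,\beta}$). Note that finiteness of $p$ or $q$ is \emph{not} needed here.

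\emph{Part (i), Phragm\'en--Lindel\"of.} I would then set $\Phi(\lambda)=e^{b\lambda^{2}}\H f(\lambda)$. With $\lambda=x+iy$ and $\operatorname{Re}(\lambda^{2})=x^{2}-y^{2}$ one gets $|\Phi(x+iy)|\le C\,e^{bx^{2}}e^{(\frac1{4a}-b)y^{2}}\omega(|y|)$, and $ab\ge\tfrac14$ forces $\tfrac1{4a}-b\le0$. On the real line $\|\Phi_{|\R}\|_{M^{q}(\R,\sigma_{\alpha,\beta})}\le A$ by \eqref{eq5}. Assuming $q<\infty$, I would apply Lemma~\ref{lem4} to $\Phi$ (with $a\mapsto b$, $p\mapsto q$) in the first quadrant and, after the reflections $\lambda\mapsto-\lambda,\ \bar\lambda$ (which preserve the hypotheses), in the other three, obtaining $\int_{\sigma}^{\sigma+1}|\Phi(\rho e^{i\psi})|\,d\rho\le A\max\{e^{b},(\sigma+1)^{2/q-1}\}$ for all $\psi,\sigma$. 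These grow at most polynomially, so a Cauchy/area estimate over unit annular sectors gives $|\Phi(\lambda)|\le C(1+|\lambda|)^{2/q-1}$; hence $\Phi$ is an entire function of polynomial growth, i.e.\ a polynomial. A nonzero polynomial cannot belong to $M^{q}(\R,\sigma_{\alpha,\beta})$ for $q<\infty$ (the measure $d|\sigma_{\alpha,\beta}|$ has infinite total mass), so $\Phi\equiv0$, whence $\H f\equiv0$ and, by injectivity of $\H$, $f=0$ a.e. If instead $p<\infty$, the same scheme runs on the other side: $f$ continues via $\H^{-1}$ and the analogue of Lemma~\ref{lem4} for $M^{p}(\R,A_{\alpha,\beta})$ (proved by the same reduction, using $A_{\alpha,\beta}(x)\ge|x|^{2\alpha+1}$) forces $e^{ax^{2}}f$ to be a polynomial, hence $0$.

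\emph{Main obstacle.} I expect the delicate point to be the \emph{critical} case $ab=\tfrac14$: then $\tfrac1{4a}-b=0$ and the bound for $\Phi$ retains the subexponential factor $\omega(|y|)$, so the clean hypothesis $|\Phi|\le A e^{bx^{2}}$ of Lemma~\ref{lem4} fails near the imaginary axis. Since $\omega$ is of order $<2$, it is dominated by the comparison multiplier $\exp(i\varepsilon e^{i\varepsilon}z^{(\pi-2\varepsilon)/\theta})$ used inside the proof of Lemma~\ref{lem4}, so the Phragm\'en--Lindel\"of conclusion should still hold; making this absorption precise (and checking the symmetric $p<\infty$ branch, where one also needs analyticity of $G^{\alpha,\beta}_{\lambda}$ in the space variable with matching bounds) is the crux. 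In the strict case $ab>\tfrac14$ the negative Gaussian $e^{(\frac1{4a}-b)y^{2}}$ swallows $\omega$ and Lemma~\ref{lem4} applies verbatim.
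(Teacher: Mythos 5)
Your part (ii) and your treatment of part (i) in the case $q<\infty$, $ab>\tfrac14$ essentially coincide with the paper's Steps 3 and 1(i): entire continuation of $\H f$ via \eqref{eq1} and H\"older's inequality, a Gaussian multiplier, Lemma~\ref{lem4} applied in the four quadrants, and the observation that a nonzero constant or affine function cannot lie in $M^q(\R,\sigma_{\alpha,\beta})$ for $q<\infty$. The differences there are cosmetic: the paper multiplies by $e^{\lambda^2/(4a)}$ instead of your $e^{b\lambda^2}$, and gets the polynomial conclusion from Cauchy's formula for $g^{(n)}(0)$ rather than from a subharmonic area estimate. Concerning the case $ab=\tfrac14$, $q<\infty$, which you leave open as ``the crux'': note that when $ab=\tfrac14$ your $\Phi=e^{b\lambda^{2}}\H f$ \emph{is} the paper's $g=e^{\lambda^{2}/(4a)}\H f$, and the paper simply runs Step 1(i) verbatim (Step 2(a)); the complex bound \eqref{eq8} does not involve $b$, and the real-line bound \eqref{eq9} holds whenever $ab\ge\tfrac14$. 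The subquadratic factor $\omega(|y|)$ you track (coming from the shifted-Gaussian norm against the exponentially growing weight $A_{\alpha,\beta}$) is a legitimate concern, but it is equally present, and silently suppressed, in the paper's \eqref{eq8}; it is not a difficulty specific to the critical case. As written, though, your proposal proves nothing in this case.

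The decisive gap is the branch $q=\infty$ (hence $p<\infty$), which the hypothesis ``at least one of $p,q$ finite'' obliges you to handle and which your proposal gets wrong. Your plan --- continue $f$ itself via $\H^{-1}$ and apply an analogue of Lemma~\ref{lem4} on the $M^{p}(\R,A_{\alpha,\beta})$ side --- cannot work, for three reasons. First, $e^{b\lambda^{2}}\H f\in M^{\infty}(\R,\sigma_{\alpha,\beta})$ gives no pointwise bound $|\H f(\lambda)|\lesssim e^{-b\lambda^{2}}$: the $M^{\infty}$ norm controls the STFT, not the function, and $M^{\infty}$ contains unbounded functions and even measures; so there is no decay with which to define a continuation of $f$. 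Second, and fatally, $G^{\alpha,\beta}_{\lambda}(x)$ is analytic in $x$ only on the strip $|\mathrm{Im}\,x|<\pi/2$ (the argument $-\sinh^{2}x$ of the hypergeometric function reaches the branch point $1$ at $x=\pm i\pi/2$), so $f$ admits at best a strip continuation, never a quadrant or entire one, and no Phragm\'en--Lindel\"of lemma of the type of Lemma~\ref{lem4} applies on that side. Third, the conclusion you predict (``$e^{ax^{2}}f$ is a polynomial, hence $0$'') has the wrong shape in the critical case: for $ab=\tfrac14$, $q=\infty$, $p<\infty$, the transform-side argument yields $g\equiv C$, i.e.\ $\H f(\lambda)=Ce^{-\lambda^{2}/(4a)}$ and hence $f=C\,E^{\alpha,\beta}_{1/(4a)}$ as in \eqref{eq14}; excluding $C\neq0$ is not a growth statement at all, but a weight phenomenon: the paper uses the lower heat-kernel bound in \eqref{eq06} together with $\bigl\Vert B_{\alpha,\beta}^{-1/2}\bigr\Vert_{M^{p}(\R,A_{\alpha,\beta})}=\infty$ for finite $p$ to force $C=0$. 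The paper's actual route for $q=\infty$ stays entirely on the $\lambda$-side: the refined ($q=\infty$) form of Lemma~\ref{lem4} gives $g\equiv C$, and then either the growth of $e^{(b-1/(4a))\lambda^{2}}$ against the $M^{\infty}$ hypothesis (Step 1(ii), $ab>\tfrac14$) or the heat-kernel/weight argument just described (Step 2(b), $ab=\tfrac14$) kills $C$. This mechanism --- in particular the appearance of the heat kernel as the borderline extremal and its exclusion by the weighted modulation norm --- is entirely absent from your proposal.
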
 

\begin{proof}
We divide the proof in several steps.

Step 1: Assume that $ab > \frac{1}{4}$. The function
\[ \H f (\lambda)=\int_{\R} f(x)\; G^{\alpha, \beta}_\lambda(-x)\; A_{\alpha, \beta} (x) dx, \quad \text{for any } \lambda \in \C, \]
is well defined, entire on $\C$, and satisfies the condition
\begin{eqnarray}\label{eq6}
|\H f (\lambda)| & \leq &  \int_{\R} |f(x)|\; |G^{\alpha, \beta}_\lambda(-x)|\; A_{\alpha, \beta} (x) dx \nonumber \\
& \leq & C \int_{\R} |f(x)|\; e^{|\mathrm{Im}(\lambda)| |x|}\; A_{\alpha, \beta} (x) dx, \quad  \; \text{by  } (\ref{eq1}), \nonumber \\
& = & C \; e^{\frac{|\mathrm{Im}(\lambda)|^2}{4a}}
\int_{\R}  e^{a x^2} |f(x)|\;  e^{- a \left(x - \frac{|\mathrm{Im}(\lambda)|}{2a}\right)^2 }\; A_{\alpha, \beta} (x) dx, \;\; \text{so by H\"older's inequality}, \nonumber \\
& \leq & C \; e^{ \frac{(\mathrm{Im}(\lambda))^2}{4a}} \; \Big\Vert e^{a x^2} f \Big\Vert_{M^p(\R, A_{\alpha, \beta})} \; \Big\Vert e^{- a \left(x - \frac{|\mathrm{Im}(\lambda)|}{2a}
\right)^2} \Big\Vert_{M^{p'}(\R, A_{\alpha, \beta})}, 
\end{eqnarray}
where $p'$ is the conjugate exponent of $p$. We consider the function $g$ defined on $\C$ by
\begin{equation}\label{eq7}
g(\lambda)=e^{\frac{\lambda^2}{4a}} \; \H f (\lambda) .
\end{equation}
Then $g$ is an entire function on $\C$ and using (\ref{eq6}), we obtain that there exists a constant $A$ such that
\begin{equation}\label{eq8}
|g(\lambda)| \leq A \;  e^{\frac{(\mathrm{Re}(\lambda))^2}{4a}}, \quad \text{for all } \lambda \in \C.
\end{equation}
In the following we consider two cases.

(i) Let $q<\infty$. Using the inequality $ab > \frac{1}{4}$ and the hypothesis (\ref{eq5}), we obtain
\begin{equation}\label{eq9}
\| g_{|\R} \|_{M^q(\R, \sigma_{\alpha, \beta})}=\left\| e^{b \lambda^2} \; \H f \;  e^{\left(\frac{1}{4a} - b\right)\lambda^2} \right\|_{M^q(\R, \sigma_{\alpha, \beta})} \leq \left\| e^{b \lambda^2} \; \H f  \right\|_{M^q(\R, \sigma_{\alpha, \beta})} \leq A.
\end{equation}
By applying the Lemma \ref{lem4} to the functions $g(\lambda), \; g(-\lambda), \; \overline{g(\overline{\lambda})}$  and  $\overline{g(-\overline{\lambda})}$, we get that for $\psi \in [0, 2\pi]$ and large $\sigma$
\[ \int_\sigma^{\sigma+1} |g(\rho e^{i \psi} )| \; d\rho \leq  B (\sigma + 1)^{\frac{2}{q}-1},  \]
for some constant $B$. Now by Cauchy's integral formula,
\[ |g^{(n)} (0)| \leq n! (2 \pi)^{-1} \int_0^{2 \pi} |g(\rho e^{i \psi} )| \; \rho^{-n} d\psi.  \]
Consequently, for large $\sigma$,
\begin{eqnarray}\label{eq10}
|g^{(n)} (0)| & \leq & n! (2 \pi)^{-1} \int_0^{2 \pi} \left( \int_\sigma^{\sigma+1}  |g(\rho e^{i \psi} )| \; \rho^{-n}  d\rho \right) d\psi \\
& \leq & B n! \; \sigma^{-n}  (\sigma + 1)^{\frac{2}{q}-1}. \nonumber
\end{eqnarray}
Let $\sigma \to \infty$. If $q \geq 2$, then $(\sigma + 1)^{\frac{2}{q}-1} \leq B_1$, for some constant $B_1$ and $g^{(n)} (0)=0$ for $n \geq 1$. So $g(\lambda)= D$, for some constant $D$. From (\ref{eq9}), $g(\lambda) = 0$ for all $\lambda \in \C$. Also, if $q < 2$, then $g^{(n)} (0)=0$ for $n \geq 2$. So $g(\lambda)= C \lambda+ D$, for some constants $C$ and $D$. From (\ref{eq8}) and (\ref{eq9}), $g(\lambda) = 0$ for all $\lambda \in \C$. Thus $\H f (\lambda)=0$ for all $\lambda \in \R$, and then  by (\ref{eq03}), we have $f=0$ almost everywhere on $\R$. 
  
(ii) Let $q = \infty$. As $ab > \frac{1}{4}$, then from (\ref{eq5}) we obtain
\begin{equation}\label{eq11}
\| g_{|\R} \|_{M^\infty(\R, \sigma_{\alpha, \beta})} \leq \left\| e^{b \lambda^2} \; \H f  \right\|_{M^\infty(\R, \sigma_{\alpha, \beta})} < \infty. 
\end{equation}
If we consider $q=\infty$, then the estimate obtained in Lemma \ref{lem4} can be refined so that $\max \{e^a , (\sigma + 1)^{\frac{2}{q}-1} \}$ is replaced by $1$ (see \cite{cow83}). From (\ref{eq10}), we have
\[ |g^{(n)} (0)| \leq   A \; n! \; \sigma^{-n}.  \]
Then $g^{(n)} (0)=0$ for $n \geq 1$. So $g(\lambda) = C$ for all $\lambda \in \C$, for some constant $C$. Therefore $e^{b \lambda^2} \H f (\lambda)=C \; e^{(b-\frac{1}{4a})\lambda^2}$ for all $\lambda \in \R$. Since $ab > \frac{1}{4}$, this function satisfies the relation (\ref{eq11}) implies that $C=0$ thus,  $f = 0$ almost everywhere on $\R$.

Step 2: Assume that $ab = \frac{1}{4}$.

(a) If $q < \infty$, with the same proof as for the point (i) of the first step, we obtain $f = 0$ almost everywhere on $\R$.

(b) Let $q = \infty$ and $1 \leq p < \infty$. We have $\| g_{|\R} \|_{M^\infty(\R, \sigma_{\alpha, \beta})} < \infty$. Then by the point (ii) of the first step, the relation (\ref{eq7}), and the property (\ref{eq05}) of the heat kernel $E^{\alpha, \beta}_{\frac{1}{4a}}$, we deduce that
\begin{equation}\label{eq12}
\H f (\lambda)=C \; e^{-\frac{\lambda^2}{4a}}=C \; \H ( E^{\alpha, \beta}_{\frac{1}{4a}} ) (\lambda), \quad \text{for all } \lambda \in \R, 
\end{equation}
for some constant $C$. Thus from the injectivity of the transform $\H$, we obtain
\begin{equation}\label{eq14}
f(x)= C \;  E^{\alpha, \beta}_{\frac{1}{4a}} (x), \quad \text{a.e. } x \in \R.
\end{equation} 
By using the relations (\ref{eq06}) and (\ref{eq14}), we get
\[ \frac{2 C e^{\frac{\mu_1}{4a}} a^{\alpha+1}}{\Gamma(\alpha+1) \sqrt{B_{\alpha, \beta}(x)}} \leq e^{a x^2} f(x), \quad \text{for all } x \in \R. \]
From the properties of the functions $A_{\alpha, \beta}$ and $B_{\alpha, \beta}$, we obtain that for finite $p$ 
\[ \left\| \frac{1}{\sqrt{B_{\alpha, \beta} (x)}} \right\|_{M^p(\R, \;A_{\alpha, \beta})} =\infty. \]
On the other hand, from (\ref{eq5}) we have $\| e^{a x^2} f \|_{M^p(\R, \;A_{\alpha, \beta})} < \infty$, this is impossible unless $C = 0$. Then we obtain from (\ref{eq14}) that $f = 0$ almost everywhere on $\R$.
    
Step 3: Assume that $ab < \frac{1}{4}$. Let $t \in (b, \frac{1}{4a})$ and $f = E^{\alpha, \beta}_t$.  From the relation (\ref{eq06}), we get 
\[ K_1 e^{-\left(\frac{1}{4t}-a\right)x^2} \leq e^{ax^2} f(x) \leq K_2 e^{-\left(\frac{1}{4t}-a\right)x^2}, \quad \text{for all }  x \in \R, \] 
for some constants $K_1, \;K_2 > 0$. As $t < \frac{1}{4a}$, we deduce that  $e^{ax^2} f \in  {M^p(\R, A_{\alpha, \beta})} $.  Using the relation (\ref{eq04}), we get
\[ e^{b \lambda^2} \H f (\lambda)=e^{-(t-b)\lambda^2}, \quad \text{for all }  \lambda \in \R. \]
The condition $t > b$ and the inequality $|C_{\alpha, \beta} (\lambda)|^{-2} \leq k_2 |\lambda|^{2 \alpha +1}$ at infinity (see \cite{tri97}, page 157) imply that $e^{b \lambda^2} \H f \in M^q(\R, \sigma_{\alpha, \beta})$. This completes the proof of the theorem.
\end{proof}

\section{Hardy's theorem for the Opdam--Cherednik transform}\label{sec5}

In this section, we determine the functions $f$ satisfying the relations (\ref{eq5})  in the special case $p = q = \infty$. The result we obtain, is an analogue of the classical Hardy's theorem for the Opdam--Cherednik transform.

\begin{theorem}
Let $f$ be a measurable function on $\R$ such that
\begin{equation}\label{eq15}
e^{a x^2} f \in M^\infty(\R, A_{\alpha, \beta}) \quad \text{and} \quad  e^{b \lambda^2} \H f \in M^\infty(\R, \sigma_{\alpha, \beta}),
\end{equation}
for some constants $a, b > 0$. Then
\begin{enumerate}
\item[(i)] If $ab > \frac{1}{4}$, we have $f = 0$ almost
everywhere.

\item[(ii)] If $ab = \frac{1}{4}$, the function $f$ is of the form $f = C  E^{\alpha, \beta}_{\frac{1}{4a}} $, for some real constant $C$.

\item[(iii)] If $ab < \frac{1}{4}$, there are infinitely many nonzero functions $f$ satisfying the conditions $(\ref{eq15})$.
\end{enumerate}
\end{theorem}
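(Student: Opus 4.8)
The plan is to treat this statement as the endpoint $p=q=\infty$ that was excluded from Theorem~\ref{th2} (which required $\min(p,q)$ finite), reusing the analytic machinery of that proof and isolating the one place where the conclusion genuinely changes, namely the boundary $ab=\frac14$. As in Theorem~\ref{th2} I would set $g(\lambda)=e^{\lambda^2/(4a)}\,\H f(\lambda)$; this is entire, and the estimate~(\ref{eq8}), $|g(\lambda)|\le A\,e^{(\mathrm{Re}\,\lambda)^2/(4a)}$, persists verbatim, since the Gaussian appearing in the H\"older step of~(\ref{eq6}) is a Schwartz function and hence lies in every $M^{p'}(\R,A_{\alpha,\beta})$, so taking $p=\infty$ (that is, $p'=1$) costs nothing.

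For cases (i) and (ii) I would note that when $ab\ge\frac14$ one has $\frac{1}{4a}-b\le 0$, whence $e^{(1/(4a)-b)\lambda^2}\le 1$ on $\R$ and $\|g_{|\R}\|_{M^\infty(\R,\sigma_{\alpha,\beta})}\le\|e^{b\lambda^2}\H f\|_{M^\infty(\R,\sigma_{\alpha,\beta})}<\infty$. Applying the refined $q=\infty$ form of Lemma~\ref{lem4} (with the factor $\max\{e^a,(\sigma+1)^{2/q-1}\}$ replaced by $1$) to the four functions $g(\lambda)$, $g(-\lambda)$, $\overline{g(\overline\lambda)}$, $\overline{g(-\overline\lambda)}$, and invoking Cauchy's estimate exactly as in~(\ref{eq10}), forces $g^{(n)}(0)=0$ for all $n\ge1$, so $g\equiv C$. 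Then $\H f(\lambda)=C\,e^{-\lambda^2/(4a)}=C\,\H(E^{\alpha,\beta}_{1/(4a)})(\lambda)$ by~(\ref{eq05}), and injectivity of $\H$ gives $f=C\,E^{\alpha,\beta}_{1/(4a)}$ a.e. For (i), $ab>\frac14$ makes $e^{b\lambda^2}\H f=C\,e^{(b-1/(4a))\lambda^2}$ grow, which is incompatible with membership in $M^\infty(\R,\sigma_{\alpha,\beta})$ unless $C=0$, giving $f=0$; for (ii), $ab=\frac14$ makes this exponential factor identically $1$, so the extremal solution survives.

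To finish (ii) I would check the converse, that $C\,E^{\alpha,\beta}_{1/(4a)}$ really satisfies~(\ref{eq15}): by the two-sided bound~(\ref{eq06}) with $t=\frac{1}{4a}$ the product $e^{ax^2}E^{\alpha,\beta}_{1/(4a)}(x)$ is comparable to $1/\sqrt{B_{\alpha,\beta}(x)}$, which is bounded because $B_{\alpha,\beta}\ge1$, hence lies in $L^\infty(\R,A_{\alpha,\beta})\hookrightarrow M^\infty(\R,A_{\alpha,\beta})$, while $e^{b\lambda^2}\H f\equiv C$. For (iii), $ab<\frac14$, I would repeat Step~3 of the proof of Theorem~\ref{th2}: for each $t$ in the nonempty interval $(b,\frac{1}{4a})$ the kernel $E^{\alpha,\beta}_t$ satisfies~(\ref{eq15}) (its transform $e^{-t\lambda^2}$ makes $e^{b\lambda^2}\H f=e^{-(t-b)\lambda^2}$ decay into $M^\infty(\R,\sigma_{\alpha,\beta})$, and~(\ref{eq06}) with $t<\frac{1}{4a}$ makes $e^{ax^2}E^{\alpha,\beta}_t$ decay), and distinct values of $t$ yield linearly independent functions, giving infinitely many.

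The main obstacle is pinning down the boundary case $ab=\frac14$ and explaining why its conclusion differs from Step~2(b) of Theorem~\ref{th2}: there, for finite $p$, the divergence $\|1/\sqrt{B_{\alpha,\beta}}\|_{M^p(\R,A_{\alpha,\beta})}=\infty$ forced $C=0$, whereas here the corresponding $M^\infty$ norm is finite, so the extremal heat kernel is a genuine nonzero solution. Making this dichotomy rigorous rests entirely on the sharp two-sided estimate~(\ref{eq06}) together with the inclusion $L^\infty\hookrightarrow M^\infty$, and it is the only step that is not a direct transcription of the earlier argument.
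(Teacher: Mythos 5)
Your proposal is correct and follows essentially the same route as the paper, which proves (i) by invoking Step 1(ii) of the proof of Theorem \ref{th2}, (ii) by invoking Step 2(b) together with the heat-kernel bound (\ref{eq06}) and the relation (\ref{eq14}), and (iii) by reusing Step 3 with $f=E^{\alpha,\beta}_t$, $t\in(b,\tfrac{1}{4a})$; your reconstruction of those steps (the auxiliary function $g(\lambda)=e^{\lambda^2/(4a)}\H f(\lambda)$, the refined $q=\infty$ form of Lemma \ref{lem4}, Cauchy estimates forcing $g$ constant, and the dichotomy at $ab=\tfrac14$ via $L^\infty\hookrightarrow M^\infty$ versus the divergence of $\lVert 1/\sqrt{B_{\alpha,\beta}}\rVert_{M^p}$ for finite $p$) is exactly the paper's argument, stated more explicitly.
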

\begin{proof}
(i) If $ab > \frac{1}{4}$, the point (ii) of the first step of the proof of Theorem \ref{th2} gives the result.

(ii) If $ab = \frac{1}{4}$ and $\| e^{b \lambda^2} \H f  \|_{M^\infty(\R, \; \sigma_{\alpha, \beta})} < \infty$, then from Step 2 (b) of the proof of Theorem \ref{th2} and the relation (\ref{eq14}), we have $f = C  E^{\alpha, \beta}_{\frac{1}{4a}} $, for some real constant $C$. As $B_{\alpha, \beta} (x) \geq 1$, from relations (\ref{eq06}) and (\ref{eq14}), we get that
\[ e^{a x^2} f(x) \leq \frac{2 C e^{\frac{\mu_2}{4a}} a^{\alpha+1}}{\Gamma(\alpha+1) \sqrt{B_{\alpha, \beta}(x)}}, \quad \text{for all } x \in \R. \]
On the other hand, from (\ref{eq15}) we have $\| e^{a x^2} f \|_{M^\infty(\R, \;A_{\alpha, \beta})} < \infty$, this is impossible unless $f = C  E^{\alpha, \beta}_{\frac{1}{4a}} $. Thus, the result of point (ii) is proved.

(iii) If $ab < \frac{1}{4}$, the functions $f = E^{\alpha, \beta}_t$, $t \in (b, \frac{1}{4a})$, satisfy the conditions $(\ref{eq15})$. This completes the proof of the theorem.
\end{proof}  

\section{Morgan's theorem for the Opdam--Cherednik transform}\label{sec6}

The aim of this section is to prove an $M^p$ -- $M^q$-version of Morgan's theorem for the Opdam--Cherednik transform. Before we prove the main result of this section, we first need the following lemma. 

\begin{lemma}[\cite{far03}, Lemma 2.3]\label{lem1}
Suppose that $\rho \in (1, 2)$, $q \in [1, \infty]$, $\eta > 0$, $M > 0$ and $ B > \eta \sin \frac{\pi}{2} (\rho-1)$. If $g$ is an entire function on $\C$ satisfying the conditions
\begin{enumerate}
\item[(i)] $|g(x + iy)| \leq M e^{\eta |y|^\rho},\;$ for any $x,y \in \R$, 
\item[(ii)] $ e^{B |x|^\rho} g_{|\R}  \in L^q(\R) $, 
\end{enumerate}
then $g = 0$.
\end{lemma}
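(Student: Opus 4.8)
The plan is to run a Phragm\'en--Lindel\"of (maximum-principle) argument on $g$, in the spirit of the classical proof of Morgan's theorem, arranged so that the hypothesis on $B$ is exactly what makes the two competing exponential rates compatible. Throughout, $\rho$ denotes the exponent from the statement.

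First I record what the hypotheses give. From (i), $|g(z)|\le Me^{\eta|\mathrm{Im}\,z|^\rho}\le Me^{\eta|z|^\rho}$, so $g$ is entire of order at most $\rho\in(1,2)$ and of finite type; writing $z=re^{i\theta}$ sharpens (i) to $|g(re^{i\theta})|\le Me^{\eta r^\rho|\sin\theta|^\rho}$, so the Phragm\'en--Lindel\"of indicator obeys $h_g(\theta)\le\eta|\sin\theta|^\rho$, and in particular $h_g(\pm\tfrac\pi2)\le\eta$. Condition (ii) controls $g$ on the real axis: when $q=\infty$ it is the pointwise bound $|g(x)|\le Ce^{-B|x|^\rho}$, giving $h_g(0),h_g(\pi)\le-B$. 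When $q<\infty$ one cannot in general pass to a pointwise bound (an $L^q$-decaying $g$ may be large just off the axis), so instead I would keep (ii) in integrated form and feed it in through H\"older's inequality on unit-length intervals, exactly as the present paper does for the Cowling--Price lemma (Lemma \ref{lem4}) and as in \cite{cow83}.

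The core step is a comparison in the first quadrant $Q=\{re^{i\theta}:0<\theta<\tfrac\pi2\}$ using the auxiliary function $F(z)=g(z)\exp(\gamma z^\rho)$, with principal branch and a constant $\gamma>0$. On the real edge, $|F(x)|\le Ce^{(\gamma-B)x^\rho}$ stays bounded once $\gamma\le B$ (for $q<\infty$ this boundedness is replaced by an arc-integral bound via H\"older against the $L^q$ datum). On the imaginary edge $\mathrm{Re}(z^\rho)=-y^\rho\sin\tfrac\pi2(\rho-1)$, so $|F(iy)|\le Me^{(\eta-\gamma\sin\frac\pi2(\rho-1))y^\rho}$. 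Since $\rho<2$, the order $\rho$ is subcritical for a quadrant, so Phragm\'en--Lindel\"of transfers the edge control into $Q$; running this in all four quadrants, together with Liouville's theorem and the fact that $\exp(-\gamma z^\rho)$ is not entire for non-integer $\rho$, forces $g\equiv0$ (the Cauchy-estimate bootstrap on $g^{(n)}(0)$ used in Theorem \ref{th2} is the same mechanism).

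The hard part is the sharp constant. Balancing the two edges above only closes the argument when $\eta/\sin\tfrac\pi2(\rho-1)\le\gamma\le B$, i.e. under the too-strong hypothesis $B\ge\eta/\sin\tfrac\pi2(\rho-1)$; this wastes a factor $\sin^2\tfrac\pi2(\rho-1)$ and misses the stated threshold. To reach the optimal constant one must use (i) in full strength, namely the uniform bound on every horizontal line rather than only along the imaginary axis. The clean way to locate the sharp value is to compare $\log|g|$ in the upper half-plane with the harmonic function having boundary values $-B|x|^\rho$, namely $H(z)=\frac{B}{\sin\frac\pi2(\rho-1)}\,\mathrm{Re}\big((-iz)^\rho\big)$; on the imaginary axis $H(iy)=\frac{B}{\sin\frac\pi2(\rho-1)}y^\rho$, and setting this against the growth $\eta y^\rho$ from (i) yields exactly the balance $\eta\le B/\sin\tfrac\pi2(\rho-1)$, i.e. $B>\eta\sin\tfrac\pi2(\rho-1)$, the strict inequality forcing $\log|g(iy)|\to-\infty$ and hence $g\equiv0$. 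Equivalently, this is the trigonometric $\rho$-convexity of $h_g$ (the distributional inequality $h_g''+\rho^2h_g\ge0$), which makes the growth $h_g(\pm\tfrac\pi2)\le\eta$ and the decay $h_g(0)\le-B$ incompatible for $g\not\equiv0$ precisely in this range. The delicate point I expect to cost the most work is making this comparison rigorous near the real axis, where $H$ plunges to $-\infty$ but (i) provides no decay, so the naive majorant fails pointwise; one must instead localise the Phragm\'en--Lindel\"of argument to sectors of opening approaching the critical value $\pi/\rho$, or argue through the trigonometric convexity of the indicator directly.
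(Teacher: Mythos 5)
A preliminary remark: the paper never proves this statement. Lemma \ref{lem1} is quoted from \cite{far03} (Lemma 2.3 there) and used as a black box; the paper's own contribution, Lemma \ref{lem2}, merely reduces the modulation-space version to the $q=\infty$ case of Lemma \ref{lem1} via the average $F(z)=\int_R^{R+1}g(tz)\,dt$ and H\"older's inequality. So your proposal has to stand on its own. It does have the right skeleton: hypothesis (i) gives the indicator bound $h_g(\theta)\le\eta|\sin\theta|^\rho$, hypothesis (ii) with $q=\infty$ gives $h_g(0),\,h_g(\pi)\le-B$; Phragm\'en--Lindel\"of must be run in quadrants (opening $\pi/2<\pi/\rho$) rather than in the half-plane, whose critical order $1$ lies below $\rho$; and H\"older on unit intervals is indeed the standard device for reducing $q<\infty$ to pointwise bounds. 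You also correctly diagnose that the phase-zero multiplier $e^{\gamma z^\rho}$ only yields the lemma under the stronger hypothesis $B\ge\eta/\sin\frac{\pi}{2}(\rho-1)$.

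The gap lies in your route to the sharp constant, in two places. First, the cure is not ``use (i) on every horizontal line,'' nor a half-plane comparison (which is blocked precisely because $\rho>1$, not by any failure near the real axis: there hypothesis (ii) matches the majorant exactly). It is a phase rotation of your own multiplier: with $W(z)=\exp\bigl(\gamma e^{i(\pi-\rho\pi/2)}z^\rho\bigr)$, so that $|W(re^{i\theta})|=\exp\bigl(-\gamma r^\rho\cos\rho(\theta-\frac{\pi}{2})\bigr)$, the two edge conditions in the first quadrant become $\gamma\ge\eta$ (imaginary axis) and $\gamma\sin\frac{\pi}{2}(\rho-1)\le B$ (real axis), which are compatible exactly under the stated hypothesis --- this is your quadrant argument made sharp. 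Second, and decisively, your closing mechanism is a non sequitur: the harmonic majorant satisfies $H(iy)=\frac{B}{\sin\frac{\pi}{2}(\rho-1)}\,y^\rho>0$, so the comparison $\log|g|\le H$ cannot force $\log|g(iy)|\to-\infty$; nothing in the hypotheses produces decay on $i\R$, since (i) permits the full growth $\eta y^\rho$ there. What the strict inequality $B>\eta\sin\frac{\pi}{2}(\rho-1)$ actually produces --- either from the phase-corrected quadrant Phragm\'en--Lindel\"of with its slack $\delta=B-\eta\sin\frac{\pi}{2}(\rho-1)>0$, or equivalently from trigonometric-convex interpolation of $h_g$ on $[0,\frac{\pi}{2}]$ and on $[\frac{\pi}{2},\pi]$ using $h_g(0)\le-B$, $h_g(\frac{\pi}{2})\le\eta$ \emph{and} $h_g(\pi)\le-B$ --- is strict negativity at the two interior rays $\theta_{1,2}=\frac{\pi}{2}\mp\frac{\pi}{2\rho}$, namely $h_g(\theta_{1,2})\le-\delta/\sin\frac{\rho\pi}{2}<0$. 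These rays are separated by exactly $\pi/\rho$, and the contradiction then requires a global rigidity theorem that your sketch never states: for a nonzero entire function of order $\rho$ and finite type, the indicator satisfies $h(\theta)+h(\theta+\pi/\rho)\ge0$ (Levin). That is the step that actually kills $g$. Trigonometric convexity alone is a local, one-sided interpolation property and is perfectly consistent with $h_g(0)$ being arbitrarily negative; in particular your final claim --- that $h_g(\pm\frac{\pi}{2})\le\eta$ and $h_g(0)\le-B$ are already incompatible precisely when $B>\eta\sin\frac{\pi}{2}(\rho-1)$ --- is false as stated, since without the decay at the other end of $\R$ and without the $\pi/\rho$-separation theorem no contradiction follows at this threshold.
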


Using the above lemma, in the following, we obtain a version of the Phragm{\'e}n--Lindl{\"o}f type result for the modulation spaces.
\begin{lemma}\label{lem2}
Suppose that $\rho \in (1, 2)$, $q \in [1, \infty)$, $\eta > 0$, $M > 0$ and $B > \eta \sin \frac{\pi}{2} (\rho-1)$. If $g$ is an entire function on $\C$ satisfying the conditions
\begin{enumerate}
\item[(i)] $|g(x + iy)| \leq M e^{\eta |y|^\rho}$, for any
$x, \;y \in \R$,
\item[(ii)] $ e^{B |x|^\rho} g_{|\R}  \in M^q(\R) $,
\end{enumerate}
then $g = 0$.
\end{lemma}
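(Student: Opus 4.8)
The goal is to deduce Lemma \ref{lem2} from the already-established Lemma \ref{lem1}, whose hypotheses are identical except that condition (ii) is stated for the Lebesgue space $L^q(\R)$ rather than the modulation space $M^q(\R)$. Since these two lemmas differ only in the function space appearing in hypothesis (ii), the natural strategy is to show that the modulation-space hypothesis of Lemma \ref{lem2} implies the Lebesgue-space hypothesis of Lemma \ref{lem1} for the same function, after which Lemma \ref{lem1} applies verbatim to conclude $g = 0$. The heart of the matter is therefore an \emph{inclusion} argument: I would exploit the embedding relations between modulation spaces and Lebesgue spaces recalled in Section \ref{sec3}, namely $M^p(\R) \hookrightarrow L^p(\R)$ for $1 \le p \le 2$ and $L^p(\R) \hookrightarrow M^p(\R)$ for $2 \le p \le \infty$.

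First I would set $\phi(x) = e^{B|x|^\rho} g(x)$ restricted to $\R$, so that hypothesis (ii) of Lemma \ref{lem2} reads $\phi \in M^q(\R)$ and the desired input to Lemma \ref{lem1} reads $\phi \in L^q(\R)$. For the range $1 \le q \le 2$ the embedding $M^q(\R) \hookrightarrow L^q(\R)$ gives $\phi \in L^q(\R)$ immediately, and Lemma \ref{lem1} then forces $g = 0$. The remaining case $2 < q < \infty$ is the main obstacle, since here the embedding runs the wrong way: $M^q(\R)$ is strictly larger than $L^q(\R)$, so membership in $M^q$ does not by itself yield membership in $L^q$. The way around this is to use the Gaussian-type growth control built into hypothesis (i) together with the factor $e^{B|x|^\rho}$. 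Concretely, I would argue that the pointwise bound $|g(x)| \le M$ on $\R$ (the $y=0$ case of (i)) combined with the strong subexponential weight $e^{B|x|^\rho}$, $\rho > 1$, and the finiteness of the $M^q$-norm, pins down enough decay of $g$ on $\R$ to upgrade $\phi \in M^q(\R)$ to $\phi \in L^q(\R)$; alternatively one observes that $e^{(B-\epsilon)|x|^\rho} g_{|\R} \in M^q(\R)$ for a slightly smaller exponent still exceeding $\eta \sin\frac{\pi}{2}(\rho-1)$, and trades the spare weight $e^{\epsilon|x|^\rho}$ against the $M^q \hookrightarrow L^\infty$-type control to secure $L^q$-integrability.

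The key technical step I expect to dominate the proof is thus the passage from $M^q$ to $L^q$ for $q > 2$, where I would lean on the fact, emphasized in the paper's introduction, that the relevant modulation spaces satisfy H\"older's inequality and carry the same duality structure as in the classical theory. Once $e^{B'|x|^\rho} g_{|\R} \in L^q(\R)$ is secured for some $B'$ still satisfying $B' > \eta \sin\frac{\pi}{2}(\rho-1)$, both hypotheses of Lemma \ref{lem1} are in force for $g$, and that lemma delivers $g = 0$ directly. The conclusion of Lemma \ref{lem2} then follows without further work. I would expect the write-up to be short, with the embedding $M^q(\R) \hookrightarrow L^q(\R)$ (or its weighted surrogate) carrying essentially all the content and the invocation of Lemma \ref{lem1} closing the argument.
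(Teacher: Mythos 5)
Your reduction works only on half the range of $q$, and the half where it fails is where the real content of the lemma lies. For $1 \le q \le 2$ your argument is complete and correct: the embedding $M^q(\R) \hookrightarrow L^q(\R)$ turns hypothesis (ii) of Lemma \ref{lem2} into hypothesis (ii) of Lemma \ref{lem1} for the same $g$, and that lemma finishes the proof; this is in fact a shorter route than the paper's for that range. But for $2 < q < \infty$ the embedding runs the wrong way, as you note, and neither of your two repair ideas holds up. First, there is no ``$M^q \hookrightarrow L^\infty$-type control'' for $q>2$ (nor for any $q>1$): $M^q(\R)$ contains unbounded functions, so no such estimate is available to trade against the spare weight $e^{\epsilon|x|^\rho}$. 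Second, the suggestion that the pointwise bound $|g(x)|\le M$ plus finiteness of $\|e^{B|x|^\rho}g\|_{M^q}$ ``pins down enough decay'' is not substantiated, and the obstacle is quantitative, not merely technical: any attempt to extract pointwise or $L^q$ decay of $e^{B|x|^\rho}g$ from its $M^q$-norm (e.g.\ via the STFT inversion formula and frequency truncation) loses a multiplicative constant strictly less than $1$ in the exponent, i.e.\ yields at best $e^{B'|x|^\rho}g \in L^\infty$ with $B' = cB$, $c<1$ bounded away from $1$. Since the hypothesis $B > \eta\sin\frac{\pi}{2}(\rho-1)$ is a threshold condition with a precise constant, such a loss destroys the conclusion: after the loss one can no longer guarantee $B' > \eta\sin\frac{\pi}{2}(\rho-1)$, so Lemma \ref{lem1} cannot be invoked.

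The paper closes exactly this gap with a device you do not mention: dilation averaging. Fix $R$ so large that $B > \eta\left(\frac{R+1}{R}\right)^{\rho}\sin\frac{\pi}{2}(\rho-1)$ and set $F(z)=\int_R^{R+1} g(tz)\,dt$. Then $F^{(n)}(0)$ is a nonzero multiple of $g^{(n)}(0)$, so $F=0$ forces $g=0$; condition (i) passes to $F$ with $\eta$ replaced by $(R+1)^{\rho}\eta$; and writing $F(x)=\frac{1}{x}\int_{Rx}^{(R+1)x} g(u)\,du$, pulling out $e^{-R^{\rho}B|x|^{\rho}}$ and applying H\"older's inequality for modulation spaces against the constant function $1$ on $[Rx,(R+1)x]$ gives the \emph{pointwise} bound $|F(x)| \le C|x|^{-1/q}\,\lVert e^{B|\cdot|^{\rho}}g\rVert_{M^q(\R)}\,e^{-R^{\rho}B|x|^{\rho}}$, hence $e^{R^{\rho}B|x|^{\rho}}F_{|\R}\in L^{\infty}(\R)$. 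Now Lemma \ref{lem1} applies to $F$ with $q=\infty$, and the only loss in the exponent is the factor $\left(\frac{R}{R+1}\right)^{\rho}$, which can be made arbitrarily close to $1$ -- this is precisely what lets the strict inequality $B > \eta\sin\frac{\pi}{2}(\rho-1)$ survive, uniformly in $q\in[1,\infty)$ and with no case-splitting. Without this (or an equivalent) mechanism, your proposal does not prove the lemma for $q>2$.
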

\begin{proof}
Let $R > 0$ be such that 
\[B > \eta ((R+1)/R)^\rho \sin \frac{\pi}{2} (\rho-1).\] 
Consider the entire function on $\C$ defined by 
\[ F(z)=\int_R^{R+1} g(tz) dt.\]
Then for any $n \in \N$, the derivatives of $F$ satisfy the condition 
\[ F^{(n)}(0) = \left[\left( (R + 1)^{n+1} - R^{n+1}\right) /(n+1)\right] g^{(n)}(0).  \]
Therefore, $g =0$ if and only if $F =0$. By assumption (i), we have
\begin{equation}\label{eq01}
|F(x+iy)| \leq  M \;e^{{(R+1)}^{\rho} \eta |y|^{\rho}},\mathrm{\;for \;any\;} x, \;y \in \R.
\end{equation}
Let $x \in \R \setminus \{0\}$, the change of variable $u = xt$ gives
\[ F(x)= \frac{1}{x} \int_{Rx}^{(R+1)x} g(u) du,\]
so 
\begin{eqnarray*}
|F(x)| \leq \frac{1}{|x|} \int_{Rx}^{(R+1)x} |g(u)| du
\leq \frac{1}{|x|} \int_{Rx}^{(R+1)x} |g(u)| \;e^{B |u|^\rho} e^{- R^\rho B |x|^\rho} du.
\end{eqnarray*}
By the H\"older's inequality, we get
\[|F(x)| \leq\frac{1}{|x|} \;\Vert e_B g \Vert_{M^q(\R)}
\; \Vert 1 \Vert_{M^{q'}([Rx, (R+1)x])} \; e^{- R^\rho B |x|^\rho}, \]
where $e_B(u)=e^{B |u|^\rho}$ and $q'$ is the conjugate exponent of $q$. Since 
$$\Vert 1 \Vert_{M^{q'}([Rx, (R+1)x])} \leq C |x|^{1/{q'}}$$ 
for some constant $C>0$, we have
\[|F(x)| \leq \frac{C}{|x|^{1/q}} \;\Vert e_B g \Vert_{M^q(\R)} \; e^{- R^\rho B |x|^\rho}. \]
Since $F$ is continuous on $\R$, using assumption (ii), we obtain 
\begin{equation}\label{eq02}
e^{R^\rho B |x|^\rho} F_{|\R} \in L^\infty(\R).
\end{equation}
Using the inequalities (\ref{eq01}) and (\ref{eq02}), and applying Lemma \ref{lem1} for $q=\infty$ to $F$, we see that $F=0$, thus $g=0$. This completes the proof.
\end{proof}

\begin{theorem}
Let $ p \in [1, \infty]$, $ q \in [1, \infty)$, $a>0$, $b>0$, and let $\alpha, \beta$ be positive real numbers satisfying $\alpha >2$ and $1/\alpha+1/\beta=1$. Suppose that $f$ is a measurable function on $\R$ such that  
\[ e^{a|x|^\alpha} f \in M^p(\R, A_{\alpha, \beta}) \quad \text{and} \quad  e^{b|\lambda|^\beta} \H f \in M^q(\R, \sigma_{\alpha, \beta}). \]
If 
\[ (a \alpha)^{1/\alpha} (b \beta)^{1/\beta} > \left( \sin \left( \frac{\pi}{2}(\beta -1 ) \right) \right)^{1/\beta},  \]
then $f = 0$. 
\end{theorem}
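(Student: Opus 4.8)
The plan is to run the argument of Theorem~\ref{th2} with the Gaussian weights $e^{ax^2}$, $e^{b\lambda^2}$ replaced by $e^{a|x|^\alpha}$, $e^{b|\lambda|^\beta}$, and with the Morgan-type Phragm\'en--Lindel\"of lemma (Lemma~\ref{lem2}) taking the place of Lemma~\ref{lem4}. Observe first that $\alpha>2$ together with $1/\alpha+1/\beta=1$ forces $\beta\in(1,2)$, so $\rho=\beta$ is an admissible exponent in Lemma~\ref{lem2}. The whole proof reduces to producing the entire function $g=\H f$, verifying the two hypotheses of Lemma~\ref{lem2} for it with $B=b$ and a suitable $\eta$, and checking that the displayed Morgan inequality is exactly the strict condition $B>\eta\sin\frac{\pi}{2}(\beta-1)$ demanded by that lemma.

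For the growth bound (hypothesis (i)), I would start from the defining integral for $\H f(\lambda)$ and the pointwise estimate (\ref{eq1}) to write
\[
|\H f(\lambda)| \le C\int_\R e^{a|x|^\alpha}|f(x)|\, e^{-a|x|^\alpha+|\mathrm{Im}(\lambda)|\,|x|}\,A_{\alpha,\beta}(x)\,dx .
\]
Splitting $a=(1-\varepsilon)a+\varepsilon a$ for a small $\varepsilon\in(0,1)$ and applying Young's inequality $uv\le u^\alpha/\alpha+v^\beta/\beta$ to the product $|\mathrm{Im}(\lambda)|\,|x|$ gives
\[
-a|x|^\alpha+|\mathrm{Im}(\lambda)|\,|x|\le -(1-\varepsilon)a|x|^\alpha+\frac{|\mathrm{Im}(\lambda)|^\beta}{\beta(\varepsilon a\alpha)^{\beta-1}} .
\]
The residual factor $e^{-(1-\varepsilon)a|x|^\alpha}$ is a bounded, rapidly decreasing function independent of $\lambda$, hence lies in $M^{p'}(\R,A_{\alpha,\beta})$; so H\"older's inequality for the modulation spaces and the hypothesis $e^{a|x|^\alpha}f\in M^p(\R,A_{\alpha,\beta})$ yield
\[
|\H f(\lambda)|\le M\, e^{\eta_\varepsilon|\mathrm{Im}(\lambda)|^\beta},\qquad \eta_\varepsilon=\frac{1}{\beta(\varepsilon a\alpha)^{\beta-1}},
\]
uniformly in $\mathrm{Re}(\lambda)$. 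This is hypothesis (i) of Lemma~\ref{lem2} with $\rho=\beta$, and the same uniform bound on compact sets makes $\H f$ entire by the standard Morera argument.

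For hypothesis (ii), I would transfer the condition $e^{b|\lambda|^\beta}\H f\in M^q(\R,\sigma_{\alpha,\beta})$ to $M^q(\R)$ exactly as in the proof of Lemma~\ref{lem4}: the estimate $|C_{\alpha,\beta}(\lambda)|^{-2}\ge k_1|\lambda|^{2\alpha+1}$ for $|\lambda|\ge1$ shows that $d|\sigma_{\alpha,\beta}|$ dominates Lebesgue measure on the tail, while near the origin both contributions are finite. Thus $g=\H f$ satisfies the hypotheses of Lemma~\ref{lem2} with $\eta=\eta_\varepsilon$ and $B=b$. It remains to secure the strict inequality $b>\eta_\varepsilon\sin\frac{\pi}{2}(\beta-1)$. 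Setting $\eta_0=1/(\beta(a\alpha)^{\beta-1})$ and using $(\beta-1)/\beta=1/\alpha$, the inequality $b>\eta_0\sin\frac{\pi}{2}(\beta-1)$ is, upon multiplying out and raising to the power $1/\beta$, precisely the stated hypothesis $(a\alpha)^{1/\alpha}(b\beta)^{1/\beta}>(\sin\frac{\pi}{2}(\beta-1))^{1/\beta}$. Since this holds strictly and $\eta_\varepsilon\downarrow\eta_0$ as $\varepsilon\uparrow1$, I can fix $\varepsilon$ close enough to $1$ that the inequality persists with $\eta_\varepsilon$ in place of $\eta_0$. Lemma~\ref{lem2} then gives $\H f\equiv0$ on $\R$, whence the Plancherel formula (\ref{eq03}) forces $f=0$ almost everywhere.

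The main obstacle is the constant bookkeeping in the growth estimate. Young's inequality alone yields the sharp peak exponent $\eta_0$, but controlling the modulation-space norm of the genuinely $\lambda$-dependent bump $e^{-a|x|^\alpha+|\mathrm{Im}(\lambda)|\,|x|}$ forces one to keep a decay factor and thereby inflate $\eta_0$ to $\eta_\varepsilon>\eta_0$. The crucial observation is that the \emph{strictness} of the Morgan inequality furnishes exactly the margin needed to absorb this loss as $\varepsilon\to1$; the algebraic identity $(\beta-1)/\beta=1/\alpha$, which makes the condition coming from $\eta_0$ coincide with the hypothesis as written, is what ties the analytic estimate to the stated threshold.
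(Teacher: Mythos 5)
Your proposal is correct and follows essentially the same route as the paper's proof: bound $|\H f(\lambda)|$ via the eigenfunction estimate (\ref{eq1}), H\"older's inequality for the modulation spaces, and Young's inequality to get growth of order $e^{\eta|\mathrm{Im}(\lambda)|^\beta}$, then feed $g=\H f$ into Lemma \ref{lem2} with $B=b$ and conclude by Plancherel. Your $\varepsilon$-split of $a$ is just a reparametrization of the paper's choice of a constant $C$ in the interval $I=\bigl((b\beta)^{-1/\beta}\bigl(\sin\tfrac{\pi}{2}(\beta-1)\bigr)^{1/\beta},\,(a\alpha)^{1/\alpha}\bigr)$ (take $\varepsilon=C^\alpha/(a\alpha)$, so $\eta_\varepsilon=1/(\beta C^\beta)$), and your limiting argument $\eta_\varepsilon\downarrow\eta_0$ plays the role of the paper's observation that $I$ is nonempty under the stated strict inequality.
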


\begin{proof}
Let $f$ be a measurable function on $\R$ such that
\begin{equation}\label{eq2}
e^{a|x|^\alpha} f \in M^p(\R, A_{\alpha, \beta})
\end{equation}
and
\begin{equation}\label{eq3}
e^{b|\lambda|^\beta} \H f \in M^q(\R, \sigma_{\alpha, \beta}).
\end{equation}
We use conditions (\ref{eq2}) and (\ref{eq3}) to prove that the Opdam--Cherednik transform of $f$ satisfies the conditions (i) and (ii) of Lemma \ref{lem2}, and hence we deduce that $f=0$ almost everywhere.

The function
\[ \H f (\lambda)=\int_{\R} f(x)\; G^{\alpha, \beta}_\lambda(-x)\; A_{\alpha, \beta} (x) dx \] 
is well defined, entire on $\C$, and satisfies the condition
\begin{eqnarray*}
|\H f (\lambda)| & \leq &  \int_{\R} |f(x)|\; |G^{\alpha, \beta}_\lambda(-x)|\; A_{\alpha, \beta} (x) dx \\
& \leq & C \int_{\R} |f(x)|\; e^{|\mathrm{Im}(\lambda)| |x|}\; A_{\alpha, \beta} (x) dx, \quad \text{for any } \lambda \in \C, \; \text{by  } (\ref{eq1}), \\
& \leq & C \; \Big\Vert e^{a|x|^\alpha} f \Big\Vert_{M^p(\R, A_{\alpha, \beta})} \; \Big\Vert e^{-a|x|^\alpha} e^{|\mathrm{Im}(\lambda)| |x|} \Big\Vert_{M^{p'}(\R, A_{\alpha, \beta})}, \quad \text{by H\"older's inequality}, \\
& \leq & C_1 \; \Big\Vert e^{|\mathrm{Im}(\lambda)| |x|-a|x|^\alpha} \Big\Vert_{M^{p'}(\R, A_{\alpha, \beta})}, \quad \text{by } (\ref{eq2}),
\end{eqnarray*}
where $C_1$ is a constant and $p'$ is the conjugate exponent of $p$. 

Let
\[C \in I= \left( (b \beta)^{-1/\beta} \left( \sin \left( \frac{\pi}{2}(\beta -1 ) \right) \right)^{1/\beta}, \; (a \alpha)^{1/\alpha}  \right) . \]
Applying the convex inequality
\[ |ty| \leq \left(\frac{1}{\alpha}\right)|t|^\alpha + \left(\frac{1}{\beta}\right)|y|^\beta \]
to the positive numbers $C|t|$ and $|y|/C$, we obtain
\[ |ty| \leq \frac{C^\alpha}{\alpha} \;|t|^\alpha + \frac{1}{\beta C^\beta} \;|y|^\beta, \]
and thus
\[ \Big\Vert e^{|\mathrm{Im}(\lambda)| |x|-a|x|^\alpha} \Big\Vert_{M^{p'}(\R, A_{\alpha, \beta})} \leq e^{{|\mathrm{Im}(\lambda)|^\beta}/{\beta C^\beta}} \; \Big\Vert e^{-\left(a- {C^\alpha}/{\alpha} \right)|x|^\alpha} \Big\Vert_{M^{p'}(\R, A_{\alpha, \beta})}. \]
Since $C \in I$ , it follows that $a > C^\alpha / \alpha$, and thus   
\[\Big\Vert e^{-\left(a- {C^\alpha}/{\alpha} \right)|x|^\alpha} \Big\Vert_{M^{p'}(\R, A_{\alpha, \beta})} < \infty .  \]
Therefore, 
\[\Big\Vert e^{|\mathrm{Im}(\lambda)| |x|-a|x|^\alpha} \Big\Vert_{M^{p'}(\R, A_{\alpha, \beta})}   < \infty. \]
Moreover,
\begin{equation}\label{eq4}
|\H f (\lambda)|  \leq \text{Const.}\; e^{{|\mathrm{Im}(\lambda)|^\beta}/{\beta C^\beta}}  \quad \text{for any } \lambda \in \C.
\end{equation}
Condition (\ref{eq3}) and inequality (\ref{eq4}) imply that the function $g(z)=\H f (z)$ satisfies the assumptions (i) and (ii) of Lemma \ref{lem2} with $\rho = \beta$, $\eta=1/(\beta C^\beta)$, and $B=b$. The condition $C \in I$ implies the inequality 
\[ b > \frac{1}{\beta C^\beta}  \sin \left( \frac{\pi}{2}(\beta -1 ) \right),  \]
which gives $\H f=0$ by Lemma \ref{lem2}, then $f=0$ by (\ref{eq03}). 
\end{proof}

\section*{Acknowledgments}
The author is deeply indebted to Prof. S. Thangavelu for several fruitful discussions and generous comments. The author is grateful to the University Grants Commission, India for providing the Dr. D. S. Kothari Post Doctoral Fellowship (Award No.- F.4-2/2006 (BSR)/MA/18-19/0032). The author also wishes to thank the anonymous referee for valuable comments and suggestions which helped to improve the quality of the paper.

\section*{Disclosure statement}
No potential conflict of interest was reported by the author.

\end{document}